\numberwithin{equation}{section}
\newtheorem{Th}{Theorem}[section]
\newtheorem{Lemma}[Th]{Lemma}
\newtheorem{Prop}[Th]{Proposition}
\theoremstyle{definition}
\newtheorem{Def}[Th]{Definition}
\newtheorem{?}[Th]{Problem}
\DeclareMathOperator{\DB}{DB} 
\DeclareMathOperator{\Dec}{Dec} 
\DeclareMathOperator{\tl}{tl}
\DeclareMathOperator{\Inc}{Inc}
\author{Hanna Mularczyk\thanks{Work supported by NSF/DMS grant 1659047 and NSA grant H98230-18-1-0010.}}
\title{Lattice Paths and Pattern-Avoiding Uniquely Sorted Permutations}
\affiliation{Harvard University, Cambridge MA}
\keywords{uniquely sorted permutation, stack-sorting, lattice path, valid hook configuration, permutation pattern.}
\begin{document}

\publicationdetails{22}{2021}{2}{6}{6494}

\maketitle

\begin{abstract} Defant, Engen, and Miller defined a permutation to be \emph{uniquely sorted} if it has exactly one preimage under West's stack-sorting map. We enumerate classes of uniquely sorted permutations that avoid a pattern of length three and a pattern of length four by establishing bijections between these classes and various lattice paths. This allows us to prove nine conjectures of Defant.

\end{abstract}

\section{Introduction}

This paper is concerned with enumerating special classes of permutations that arise from West's \textit{stack-sorting map}. Stack-sorting was originally a computer algorithm on stacks introduced by \cite{knuth}. Later, \cite{west} defined the stack-sorting map, which we call $s$, as a deterministic variant of Knuth's algorithm. Since then, the map has been studied extensively: see \cite{c2, c3, c4, c5, bm, main, c8, c9, c10, preimages, c13, z}.

The stack-sorting map itself will not be relevant to the methods in this paper, but for sake of completeness, we include a simple recursive definition of it. The map $s$ sends the empty permutation to itself. For a permutation $\pi\in S_n$, we can write $\pi=LnR$ and define $s(\pi)=s(L)s(R)n$. For example, $s(516243)=s(51)s(243)6=s(1)5s(2)s(3)46=152346$.

The $\textit{fertility}$ of a permutation $\pi$ is the number of preimages of $\pi$ under $s$, or $|s^{-1}(\pi)|$. \cite{bm} defined a permutation to be \textit{sorted} if it has positive fertility, that is, if it has some preimage under $s$. 

Recently, \cite{stack} defined a permutation to be \textit{uniquely sorted} if its fertility is exactly 1, giving rise to a new and fruitful type of permutation that has a surprising amount of structure. We let $\mathcal{U}_n$ denote the set of uniquely sorted permutations in $S_n$. The work in \cite{stack} suggests that the relationship between uniquely sorted permutations and general permutations is analogous to the relationship between matchings and general set partitions. Moreover, the authors prove that the sets of uniquely sorted permutations of odd length (which we soon see are the only nonempty sets) are counted by an interesting sequence first introduced in \cite{lasalle} called \textit{Lassalle's sequence} (OEIS A180874).

A \textit{descent} of a permutation $\pi\in S_n$ is an index $i \in [n-1]$ where $\pi_i>\pi_{i+1}$, in which case we call $\pi_i$ a \textit{descent top} and $\pi_{i+1}$ a \textit{descent bottom}. Similarly, an \textit{ascent} of $\pi$ is an index $i \in [n-1]$ where $\pi_i<\pi_{i+1}$, in which case we call $\pi_i$ an \textit{ascent bottom} and $\pi_{i+1}$ an \textit{ascent top}. It follows that any permutation has an equal number of descents, descent tops, and descent bottoms as well as an equal number of ascents, ascent bottoms, and ascent tops. Moreover, for all $2 \leq i \leq n$, we have that $\pi_i$ is either an ascent top or a decent bottom (and, similarly, for all $1 \leq i \leq n-1$, $\pi_i$ is either an ascent bottom for a decent top). The following theorem characterizes uniquely sorted permutations and will serve as a basis for much of the work in this paper. 

\begin{Th}[\cite{stack}]\label{defantthm}
A permutation $\pi \in S_n$ is uniquely sorted if and only if it is sorted and it has exactly $\frac{n-1}{2}$ descents.
\end{Th}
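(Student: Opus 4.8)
The plan is to reduce the statement to the known \emph{Fertility Formula} of Defant, which expresses the fertility of $\pi$ as a sum over its \emph{valid hook configurations} (VHCs). Writing $d$ for the number of descents of $\pi\in S_n$, each VHC $\mathcal H$ consists of exactly $d$ hooks, one rooted at each descent top, and these hooks cut the plot of $\pi$ into $d+1$ regions. If $q_0,\dots,q_d$ denote the numbers of points of $\pi$ that are not northeast hook-endpoints and lie in the respective regions (so that $\sum_i q_i=n-d$), then
\[
|s^{-1}(\pi)|=\sum_{\mathcal H}\prod_{i=0}^{d}C_{q_i},
\]
where $C_k$ is the $k$-th Catalan number. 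The only elementary facts I need about this formula are that $\pi$ admits at least one VHC if and only if it is sorted, and that $C_k\ge 1$ with equality if and only if $k\in\{0,1\}$; consequently each summand is at least $1$, and a summand equals $1$ exactly when every region of its VHC contains at most one free point.

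First I would prove the auxiliary bound that every sorted permutation $\tau\in S_n$ satisfies $d\le\frac{n-1}{2}$. This follows by induction on $n$ directly from the recursion $s(\pi)=s(L)s(R)n$: writing $\tau=s(\pi)$ with $\pi=LnR$ and $|L|=\ell$, $|R|=r$, the junction before the terminal $n$ is an ascent, so the descents of $\tau$ are those of $s(L)$, those of $s(R)$, and at most one more at the $s(L)$--$s(R)$ junction. The inductive hypothesis gives $\mathrm{des}(s(L))\le\frac{\ell-1}{2}$ and $\mathrm{des}(s(R))\le\frac{r-1}{2}$, whence $d\le\frac{\ell-1}{2}+\frac{r-1}{2}+1=\frac{\ell+r}{2}=\frac{n-1}{2}$, with the boundary cases $\ell=0$ or $r=0$ handled the same way.

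Granting these two ingredients, the forward implication is immediate: if $\pi$ is uniquely sorted then $|s^{-1}(\pi)|=1$, so $\pi$ is sorted and, since every summand of the Fertility Formula is at least $1$, there is exactly one VHC and its product equals $1$. Thus all of its $q_i$ lie in $\{0,1\}$, forcing $n-d=\sum_i q_i\le d+1$, i.e.\ $d\ge\frac{n-1}{2}$; combined with the auxiliary bound this yields $d=\frac{n-1}{2}$.

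The substance of the theorem lies in the converse. Assume $\pi$ is sorted with $d=\frac{n-1}{2}$, so that $\sum_i q_i=n-d=d+1$ equals the number of regions in every VHC. Each summand is then $1$ exactly when all $q_i=1$ and is at least $2$ otherwise, so I must show that $\pi$ has a \emph{unique} VHC and that it is balanced, in the sense that every region holds exactly one free point. I expect this rigidity to be the main obstacle: it asserts that the extremal descent count $d=\frac{n-1}{2}$ among sorted permutations leaves no freedom in how the hooks are drawn. The natural attack is an induction that peels off one hook of a putative VHC—say the one rooted at the leftmost descent top—and checks that the resulting smaller permutation is again sorted with the maximal number of descents, so that its VHC is unique and balanced by hypothesis, and that the removed hook could only have been attached in one way. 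The delicate points, and the reason this step is not routine, are that naively deleting the terminal letter $n$ overshoots the descent bound (the shortened permutation would have too many descents to be sorted), so the reduction must remove points in compatible pairs, and that the value sets on the two sides of a hook interleave, so one must argue that each hook's northeast endpoint and the lone free point of each region are uniquely determined. Carrying out this structural analysis of extremal VHCs is what completes the proof.
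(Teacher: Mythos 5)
This theorem is stated in the paper with a citation to \cite{stack} and is not proved there, so there is no in-paper argument to compare against; your route through Defant's Fertility Formula $|s^{-1}(\pi)|=\sum_{\mathcal H}\prod_i C_{q_i}$ is the natural (and, in the cited source, the actual) framework. Your auxiliary bound $\mathrm{des}(\tau)\le\frac{n-1}{2}$ for sorted $\tau$, proved by induction from $s(LnR)=s(L)s(R)n$, is correct, and with it the forward implication is complete: fertility $1$ forces a single VHC whose composition has all parts in $\{0,1\}$, hence $n-d=\sum_i q_i\le d+1$, i.e.\ $d\ge\frac{n-1}{2}$.

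The genuine gap is the converse, which you yourself identify as ``the substance of the theorem'' and then do not prove. After the (correct) reduction, two things remain: (a) the unique VHC guaranteed by sortedness is in fact the \emph{only} VHC, and (b) its composition is balanced, i.e.\ every $q_i=1$. For (b) there is a direct argument you could supply: in any valid hook configuration, region $0$ contains the free point $(1,\pi_1)$ and each region $i\ge 1$ contains the descent bottom $(d_i+1,\pi_{d_i+1})$, which cannot be a northeast endpoint of an earlier hook without forcing two hooks to cross; hence every part of every valid composition is positive, which both re-derives your auxiliary bound without the separate induction and immediately gives $q_i=1$ for all $i$ when $\sum_i q_i=d+1$. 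But (a) --- that the extremal descent count $d=\frac{n-1}{2}$ leaves room for exactly one hook configuration --- is the real content of the converse, and your final paragraph is a plan rather than a proof: the peeling induction is not set up (which hook is removed, why its northeast endpoint is forced, why the reduced permutation is again sorted with the extremal number of descents), and you explicitly flag these as unresolved. As written, a sorted permutation with $\frac{n-1}{2}$ descents could a priori have two balanced VHCs and hence fertility $2$, so the converse remains open in your write-up.
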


As an immediate consequence, uniquely sorted permutations must have odd length, so from now on we will refer to the set $\mathcal{U}_{2k+1}$ instead of $\mathcal{U}_n$.

Let $\mathcal{U}_{2k+1}(\tau^{(1)}, \ldots, \tau^{(\ell)})$ be the set of permutations in $\mathcal{U}_{2k+1}$ that avoid all of the patterns $\tau^{(1)}, \ldots, \tau^{(\ell)}$ (see Section \ref{prelims} for a definition of pattern avoidance). In \cite{main}, combinatorial classes of this form with each $\tau^{(i)}$ length 3 are enumerated. This is done primarily through bijections between these classes and intervals of various posets on the set of Dyck paths of a fixed length.

In the same paper, Defant enumerates two classes of the form $\mathcal{U}_{2k+1}(\tau^{(1)}, \tau^{(2)})$, where $\tau^{(1)}$ has length 3 and $\tau^{(2)}$ has length 4, before making 18 conjectures concerning further interesting classes of this form. Each row of Table \ref{conjectures} represents the conjecture that the class of uniquely sorted permutations of odd length avoiding the given patterns is counted by the corresponding OEIS sequence.

{\tabulinesep=1.2mm
\begin{table}[H]
\centering

\begin{tabu}{|c|[1.5pt]c|}

    \hline Patterns & OEIS Sequence \\\tabucline[2.5pt]{-}
    $*312,1432$ &  \\
    $*312,2431$ &  \\
    $*312,3421$ & A001764 \\
    $*132,3412$ &  \\
    $*231,1423$ &  \\\hline
    $312,1243$ &  A122368 \\\hline
\end{tabu}
\begin{tabu}{|c|[1.5pt]c|}

    \hline Patterns & OEIS Sequence \\\tabucline[2.5pt]{-}
    $*132,3421$ &  \\
    $*132,4312$ & A001700 \\
    $231,1243$ &  \\\hline \noalign{\vskip -.08cm}  
    \makecell{\,\,$132,2341$ \vspace{.05cm}\\ $132,4123$} & \hspace{.13cm}A109081 \vspace{-.155cm} \\\hline
    $312,2341$ & A006605 \\\hline
\end{tabu}
\begin{tabu}{|c|[1.5pt]c|}

    \hline Patterns & OEIS Sequence \\\tabucline[2.5pt]{-}
    $312,3241$ & A279569 \\\hline
    $312,4321$ & A063020 \\\hline
    $132,4231$ & A071725 \\\hline
    $*231,1432$ & A001003 \\\hline
    $*231,4312$ & A127632 \\\hline
    $231,4321$ & A056010 \\\hline
\end{tabu}\vspace{.3cm}
\caption{Conjectural OEIS sequences enumerating sets of the form
$\mathcal{U}_{2k+1}(\tau^{(1)},\tau^{(2)})$.}\label{conjectures}
\end{table}}

In this paper, the conjectures concerning the nine notable pairs of patterns marked with an asterisk ($*$) in Table \ref{conjectures} are proven as theorems. The OEIS sequence A001700 consists of the binomial coefficients ${2k-1 \choose k}$, and the OEIS sequence A001764 consists of the 3-Catalan numbers $\frac{1}{2k+1}{3k \choose k}$, making these classes easier to enumerate since we have relatively simple closed formulas counting them. Additionally, the sequence A001003 counts the little Schr\"oder numbers, which have been well studied. The sequences A001764 and A127632 also count $\mathcal U_{2k+1}(312, 1342)$ and $\mathcal U_{2k+1}(231, 4132)$, respectively, as proven in \cite{main}. 

Starting in Section \ref{almost}, these nine classes are enumerated one by one. For each, the general structure of the permutations in the class is found by decomposition based on pattern avoidance. Then, using this structure, the class is enumerated, either by direct counting, bijection with a previously-counted class, generating functions, or, as the title of this paper suggests, bijection with certain types of lattice paths. Lattice paths were studied as early as the late $19^{\text{th}}$ century to solve \textit{Bertrand's ballot problem} (see \cite{ballot}) via a bijection with Dyck paths in \cite{dyck}. Since then, they have grown to become a crucial concept in math, computer science, statistics, and physics; see \cite{paths} for an overview of this history. Dyck paths, as mentioned before, are central to the methods in \cite{main} and will appear later in this paper. The bijections concerning lattice paths in this paper rely strongly on a fascinating natural analog between the conditions on uniquely sorted permutations and the conditions on certain types of lattice paths.

\section{Preliminaries}\label{prelims}
\subsection{Pattern Avoidance}

In this paper, a $\textit{permutation}$ of length $n$ is a sequence containing each element of the set $[n]=\{1,\ldots,n\}$ exactly once, written out in one-line notation. The \textit{symmetric group on n symbols}, denoted $S_n$, is the set of all permutations of length $n$. More generally, given a sequence of positive integers $a=a_1\cdots a_n$, any sequence of the form $a_{i_1}\cdots a_{i_\ell}$ with $i_1<\cdots<i_\ell$ is a \textit{subsequence} of $a$. If $i_1, \ldots, i_\ell$ are all consecutive, we call $a_{i_1}\cdots a_{i_\ell}$ a \textit{consecutive subsequence} of $a$.

The \textit{normalization} of a sequence of distinct positive integers $a=a_1\cdots a_n$ is the permutation in $S_n$ obtained by replacing the $i^\text{th}$ smallest entry of $a$ with $i$ for all $i$. For example, the normalization of 381 is 231. For a sequence of distinct positive integers $a$ and a permutation $\tau$, we say that $a$ \textit{contains} $\tau$ if there exists a subsequence of $a$ whose normalization is $\tau$. Otherwise, $a$ \textit{avoids} $\tau$. We let $\text{Av}(\tau^{(1)}, \ldots, \tau^{(\ell)})$ be the set of permutations avoiding all of the patterns $\tau^{(1)}, \ldots, \tau^{(\ell)}$.

\subsection{The Canonical Hook Configuration}

In order to make the sorted condition more workable, we introduce the concept of hooks. A permutation $\pi=\pi_1\cdots\pi_n$ can be visually represented via its \emph{plot}, which is the set of all points $(i,\pi_i)$ such that $i \in [n]$. These points can be connected via $\textit{hooks}$ of $\pi$. A hook $H$ is created by starting at a point $(i, \pi_i)$, which we call the \textit{southwest (SW) endpoint} of $H$, and then moving upward and then to the right to connect it to a second point $(j, \pi_j)$, which we call the \textit{northeast (NE) endpoint} of $H$. A point $(r, \pi_r)$ \textit{lies strictly below} $H$ if $i<r<j$ and $\pi_r<\pi_j$; it \textit{lies weakly below} $H$ if  $i<r\leq j$ and $\pi_r\leq\pi_j$.

Let $\pi$ have descents $d_1<\cdots<d_k$. The \textit{canonical hook configuration (CHC)}\footnote{Defant instead uses the term \textit{canonical valid hook configuration}; the present author drops the \textit{valid} to avoid wordiness.} of $\pi$ is the tuple $\mathcal{H} = (H_1, . . . , H_k)$ of hooks of $\pi$, defined as follows. First, the SW endpoint of the hook $H_i$
is $(d_i, \pi_{d_i})$. Let $\mathfrak{N}_i$ denote the NE endpoint of $H_i$. We determine these NE endpoints by starting with $\mathfrak{N}_k$, which is the leftmost point above and to the right of $(d_k, \pi_{d_k})$. Then, decrementing $i$ by one for each hook, $\mathfrak{N}_{i}$ is the leftmost point above and to the right of $(d_{i}, \pi_{d_{i}})$ that does not lie weakly below any of the hooks $H_{i+1}, \ldots, H_{k}$. If $\mathfrak{N}_{i}$ does not exist for some $i$, then $\pi$ does not have a CHC. An example of the CHC of a permutation is shown in Figure \ref{fig:CHC}.

\begin{figure}[H]
\centering
\begin{tikzpicture}[scale=.4]
\draw[fill] (1,2) circle [radius=0.2];
\draw[fill] (2,7) circle [radius=0.2];
\draw[fill] (3,3) circle [radius=0.2];
\draw[fill] (4,5) circle [radius=0.2];
\draw[fill] (5,9) circle [radius=0.2];
\draw[fill] (6,4) circle [radius=0.2];
\draw[fill] (7,8) circle [radius=0.2];
\draw[fill] (8,1) circle [radius=0.2];
\draw[fill] (9,6) circle [radius=0.2];
\draw[fill] (10,10) circle [radius=0.2];
\draw[fill] (11,11) circle [radius=0.2];
\draw[fill] (12,12) circle [radius=0.2];
\node [right] at (1,2) {2};
\node [right] at (2,7) {7};
\node [right] at (3,3) {3};
\node [right] at (4,5) {5};
\node [right] at (5,9) {9};
\node [right] at (6,4) {4};
\node [right] at (7,8) {8};
\node [right] at (8,1) {1};
\node [right] at (9,6) {6};
\node [right] at (10,10) {10};
\node [right] at (11,11) {11};
\node [right] at (12,12) {12};
\draw [thick] (2,7)--(2,9)--(5,9)--(5,11)--(11,11);
\draw[thick] (7,8)--(7,10)--(10,10);
\node [above] at (3.5, 9) {$H_1$};
\node [above] at (8, 11) {$H_2$};
\node [below] at (8.5, 10) {$H_3$};
\end{tikzpicture}
\caption{The CHC of 2\,7\,3\,5\,9\,4\,8\,1\,6\,10\,11\,12.}
\label{fig:CHC}
\end{figure}

The following proposition allows us to determine whether a permutation is sorted using the CHC.

\begin{Prop}[\cite{preimages}]\label{CHC}
A permutation $\pi$ is sorted if and only if it has a canonical hook configuration.
\end{Prop}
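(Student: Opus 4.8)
The plan is to factor the statement through the theory of \emph{valid hook configurations}, of which the canonical hook configuration is a distinguished example. Recall that a valid hook configuration (VHC) of $\pi$ is a tuple of pairwise non-crossing hooks whose SW endpoints are exactly the descent tops of $\pi$, subject to the compatibility conditions that no descent top is covered and that every remaining point lies weakly below an appropriate hook; the CHC is simply the VHC produced by the greedy ``leftmost'' rule described above. Thus I would split Proposition \ref{CHC} into two independent claims: (A) $\pi$ is sorted if and only if it admits at least one VHC, and (B) $\pi$ admits a VHC if and only if the greedy construction of the CHC succeeds.

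For claim (A), the key is the \emph{fertility formula}, which computes the number of preimages of $\pi$ directly from its hook configurations. Using the recursive identity $s(LnR)=s(L)s(R)n$, one analyzes how a preimage $\sigma$ of $\pi$ passes through the stack: each descent top of $\pi$ records where an element is popped, and the element on top of the stack at that moment becomes the NE endpoint of a hook. Tracking these pops shows that every preimage determines a VHC, and conversely that a VHC can be filled in to recover all preimages inducing it. The count of preimages realizing a fixed VHC turns out to be a product $\prod_i C_{q_i}$ of Catalan numbers indexed by the regions into which the hooks cut the plot, giving $|s^{-1}(\pi)| = \sum_{\mathcal H} \prod_i C_{q_i(\mathcal H)}$. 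Since every Catalan number is positive, each VHC contributes a strictly positive summand, so $\pi$ is sorted precisely when this sum is nonempty, that is, when a VHC exists.

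For claim (B), one direction is immediate: by construction the CHC, when it exists, is itself a VHC, so its existence implies that a VHC exists. For the converse I would run a greedy exchange argument. Suppose some VHC $\mathcal H'$ exists but the canonical construction fails at step $i$, meaning that no admissible NE endpoint for $H_i$ remains after the leftmost choices for $H_{i+1},\ldots,H_k$ have been made. Comparing the canonical partial configuration to $\mathcal H'$ and processing the hooks from right to left, I would show that replacing each NE endpoint of $\mathcal H'$ by the leftmost admissible point preserves validity at every step; this is an exchange lemma asserting that the leftmost legal choice never destroys the ability to complete the remaining hooks. Iterating converts $\mathcal H'$ into the canonical configuration, contradicting the assumed failure, and hence the CHC exists whenever any VHC does.

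I expect claim (A), the fertility formula, to be the main obstacle, since it requires a careful bijective bookkeeping of the stack dynamics: one must verify that the hook structure captures exactly the non-crossing constraints imposed by a single pass through the stack, and that the residual freedom within each region is genuinely Catalan-many. The greedy argument in (B) is more routine, but its exchange lemma still needs care to rule out the possibility that a non-leftmost choice is forced for global reasons; establishing that the leftmost choice is always safe is the delicate point there.
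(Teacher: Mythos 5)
The paper does not prove this proposition at all: it is imported verbatim from \cite{preimages}, so there is no internal argument to compare yours against. Your two-step factorization --- (A) sorted $\iff$ some valid hook configuration exists, via the fertility formula $|s^{-1}(\pi)|=\sum_{\mathcal H}\prod_i C_{q_i(\mathcal H)}$ and positivity of Catalan numbers, and (B) a VHC exists $\iff$ the greedy canonical construction succeeds --- is in fact the architecture of the proof in the cited source and its successors, so you have correctly identified where the content lives rather than proposing a genuinely new route. The payoff of organizing it this way is exactly what you say: (A) isolates all of the stack dynamics in one structural theorem, and (B) reduces an existential statement over all VHCs to a deterministic test.

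That said, as written this is a plan rather than a proof: both of the load-bearing steps are described, not established. For (A), the bijective bookkeeping you defer is genuinely the hard part --- one must show that the pop events of a preimage trace out non-crossing hooks whose SW endpoints are precisely the descent tops, and that the preimages inducing a fixed configuration are counted by a product of Catalan numbers over the regions cut out by the hooks; none of that is routine. For (B), your exchange lemma needs a precise invariant (that after replacing the NE endpoints of $H_k,\dots,H_{i+1}$ by their leftmost admissible choices, every point that the original configuration needed to place weakly below some hook can still be so placed), and your parenthetical recollection of the VHC axioms is too loose to run that induction as stated --- the actual conditions concern non-crossing of hooks and which points may lie below which hooks, and the argument lives or dies on getting them exactly right. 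So the proposal is directionally correct and consistent with the literature the paper cites, but it does not yet constitute a self-contained proof of the proposition.
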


This proposition, along with Theorem \ref{defantthm}, gives us that $\pi\in S_n$ is uniquely sorted if and only if it has a CHC and exactly $\frac{n-1}{2}$ descents. The permutation in Figure \ref{fig:CHC} has a CHC, so it is sorted. But it has length 12 and only 3 descents, so it is not uniquely sorted (in fact, its fertility is 160). We now introduce one particularly useful lemma.

\begin{Lemma}[\cite{main}]\label{partition}
Let $\pi \in \mathcal{U}_{2k+1}$, and let $\mathfrak{N}_1, \ldots, \mathfrak{N}_k$ be the NE endpoints of the hooks in $\pi$'s CHC. Let $\DB(\pi)$ be the set of descent bottoms of $\pi$. The two k-element sets $\DB(\pi)$ and $\{\mathfrak{N}_1, \ldots, \mathfrak{N}_k\}$ form a partition of $\{(i, \pi_i):2\leq i \leq 2k+1\}$.
\end{Lemma}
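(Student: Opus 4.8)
The plan is to reduce the statement to one structural fact about NE endpoints and then close by counting. First I would record the elementary local dichotomy. Since $\pi\in\mathcal U_{2k+1}$ has exactly $k$ descents by Theorem 1.1, it has exactly $k$ ascents, hence $k$ descent bottoms and $k$ ascent tops. Moreover every point $(i,\pi_i)$ with $2\le i\le 2k+1$ is \emph{either} a descent bottom (when $\pi_{i-1}>\pi_i$) \emph{or} an ascent top (when $\pi_{i-1}<\pi_i$), and these cases are mutually exclusive and exhaustive. Thus the $2k$-element set $\{(i,\pi_i):2\le i\le 2k+1\}$ is already partitioned into the $k$ descent bottoms of $\DB(\pi)$ and the $k$ ascent tops, and the lemma reduces to showing that $\{\mathfrak N_1,\ldots,\mathfrak N_k\}$ is exactly the set of ascent tops.

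To prove that equality I would establish three things. (i) The NE endpoints are pairwise distinct: if $\mathfrak N_i=\mathfrak N_{i'}$ with $i<i'$, then this common point, being the NE endpoint of $H_{i'}$, lies weakly below $H_{i'}$, contradicting the requirement in the definition of the CHC that $\mathfrak N_i$ lie weakly below none of $H_{i+1},\ldots,H_k$; hence $|\{\mathfrak N_1,\ldots,\mathfrak N_k\}|=k$. (ii) No $\mathfrak N_i$ is a descent bottom, i.e. each $\mathfrak N_i$ is an ascent top. (iii) Given (i) and (ii), $\{\mathfrak N_1,\ldots,\mathfrak N_k\}$ is a $k$-element subset of the $k$-element set of ascent tops, so the two coincide; combined with the first paragraph this yields the asserted partition.

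The heart of the argument, and the step I expect to be the main obstacle, is (ii). Suppose toward a contradiction that some $\mathfrak N_i=(j,\pi_j)$ is a descent bottom, so $\pi_{j-1}>\pi_j$. Then $j-1$ is a descent, so $(j-1,\pi_{j-1})$ is the SW endpoint of some hook $H_s$ of the CHC. I would first check $s>i$: because $\mathfrak N_i$ is the NE endpoint of $H_i$ we have $\pi_{d_i}<\pi_j<\pi_{j-1}$, so $d_i\neq j-1$, and since $d_i<j$ this forces $d_i<j-1=d_s$, whence $s>i$ by the ordering $d_1<\cdots<d_k$. Next, $\mathfrak N_s=(q,\pi_q)$ lies strictly above and to the right of $(j-1,\pi_{j-1})$, so $q>j-1$ and $\pi_q>\pi_{j-1}$; as $\pi_j<\pi_{j-1}$, column $j$ is too low to host $\mathfrak N_s$, giving $q>j$. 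But then $(j,\pi_j)=\mathfrak N_i$ satisfies $d_s=j-1<j\le q$ and $\pi_j<\pi_q$, so $\mathfrak N_i$ lies weakly below $H_s$ with $s>i$, contradicting the defining property of the canonical hook configuration. The only delicate points are the strict inequalities that pin down $s>i$ and the verification that $\mathfrak N_s$ truly lies to the right of column $j$; both follow from the fact that a hook's NE endpoint is strictly above and to the right of its SW endpoint, so I do not expect them to cause genuine difficulty once the setup is in place.
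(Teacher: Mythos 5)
The paper states this lemma as an imported result from \cite{main} and gives no proof of its own, so there is nothing internal to compare against; your argument must stand on its own, and it does. The reduction to ``the NE endpoints are exactly the ascent tops'' is the right frame, step (i) correctly uses that a hook's NE endpoint lies weakly below that hook while $\mathfrak N_i$ is required not to lie weakly below any of $H_{i+1},\ldots,H_k$, and the key step (ii) is sound: if $\mathfrak N_i=(j,\pi_j)$ were a descent bottom, then $j-1$ is a descent $d_s$ with $s>i$ (since $\pi_{d_i}<\pi_j<\pi_{j-1}$ rules out $d_i=j-1$), the already-constructed $\mathfrak N_s$ must sit strictly above $\pi_{j-1}$ and hence strictly to the right of column $j$, and so $\mathfrak N_i$ lies weakly below $H_s$, contradicting the CHC's defining condition. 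The only implicit ingredient worth flagging is that $H_s$ exists at all, which you get from Proposition 2.2: $\pi$ is sorted, so the full CHC exists and every descent top is the SW endpoint of some hook. The counting finish ($k$ distinct NE endpoints inside the $k$-element set of ascent tops) then closes the partition claim correctly.
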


For example, the descent bottoms of the permutation in Figure \ref{fig:CHC} are the points $(3,3), (6,4),$ and  $(8,1)$, and the NE endpoints of hooks are $(5,9), (10,10),$ and $(11,11)$. Since the point $(7,8)$ is neither, the permutation is not uniquely sorted.  Note that this lemma tells us that in a uniquely sorted permutation, NE endpoints are precisely ascent tops and SW endpoints are precisely decent tops. Another immediate consequence is that the plot of any $\pi\in \mathcal{U}_{2k+1}$ must end with the point $(2k+1,2k+1)$.

\subsection{Permutation Structure}

Since we will be using Proposition \ref{CHC} to determine if a permutation is sorted, we will be dealing heavily with the plot of a permutation and thus will consider a permutation and its plot synonymously. Consequently, for a permutation $\pi$, we will refer to the entry $\pi_i$ and the point $(i, \pi_i)$ interchangeably. Note, also, that when we look at a plot, it suffices to consider only the relative order of the points, and not the actual positions of the points. 

For a plot containing two disjoint subsequences $\mu$ and $\lambda$, we say $\mu$ \textit{is above} $\lambda$ if every point in $\mu$ is above every point in $\lambda$. We define $\textit{below}$, $\textit{to the right of}$, and $\textit{to the left of}$ in a similar fashion.

Given the permutations $\mu$ and $\lambda$, their sum, denoted $\mu \oplus \lambda$, is the permutation obtained by placing the plot of $\lambda$ above and to the right of $\mu$. Their skew sum, $\mu \ominus \lambda$, is the permutation obtained by placing the plot of $\lambda$ below and to the right of $\mu$. Geometrically, we have:

\[\mu \oplus \lambda =\begin{array}{l}\begin{tikzpicture}[scale=.6]
\draw (-1,-1) rectangle (0,0);
\node at (-0.5,-0.5) {$\mu$};
\draw (0,0) rectangle (1,1);
\node at (0.5,0.5) {$\lambda$};
\end{tikzpicture}\end{array}\quad\text{and}\quad \mu \ominus \lambda =\begin{array}{l}\begin{tikzpicture}[scale=.6]
\draw (0,0) rectangle (1,1);
\node at (0.5, 0.5) {$\mu$};
\draw (1,0) rectangle (2,-1);
\node at (1.5,-0.5) {$\lambda$};
\end{tikzpicture}\end{array}.\]

Let $\Dec(n)=n(n-1)\cdots 21$ and $\Inc(n)=12\cdots (n-1)n$ denote the decreasing permutation of length $n$ and the increasing permutation of length $n$, respectively.  We denote a decreasing and increasing permutation, respectively, with the symbols:

\[\begin{array}{l}\begin{tikzpicture}[scale=1]
\draw[gray] (0,0) rectangle (1,1);
\draw[thick](0.8,0.2)--(0.2, 0.8);
\end{tikzpicture}\end{array}\quad\text{and}\quad\begin{array}{l}\begin{tikzpicture}[scale=1]
\draw[gray] (0,0) rectangle (1,1);
\draw[thick] (0.2,0.2)--(0.8,0.8);
\end{tikzpicture}\end{array}.\]

The \textit{tail length} of a
permutation $\pi= \pi_1\cdots\pi_n$, denoted $\tl(\pi)$, is the smallest nonnegative integer $\ell$ such that $\pi_{n-\ell} \neq n-\ell$. By convention, we let $\tl(\Inc(n))=n$.  If $\tl(\pi)=\ell$, then the \textit{tail} of $\pi$ is the list of points $(n - \ell + 1, n- \ell + 1), \ldots, (n, n)$.

\section{Three Bijections with Dyck Paths}

\subsection{Dyck Paths}A \textit{Dyck path} of semilength $k$ is a path starting at $(0,0)$ and ending at $(2k,0)$ that consists of $k$ $(1,1)$ steps (called \textit{up steps}) and $k$ $(1,-1)$ steps (called \textit{down steps}) and at no point crosses below the horizontal axis. We let $\mathcal{D}_k$ denote the set of Dyck paths of semilength $k$. It is a classical result that the sets $\mathcal{D}_k$ are counted by the $\textit{Catalan numbers}, C_k =\frac{1}{k+1}{2k \choose k}$. We can associate an \textit{up-down sequence} to a Dyck path by reading the path from left to right and recording the letter $U$ for an up step and $D$ for a down step. Note that the above-the-horizontal condition in a Dyck path is equivalent to every prefix of an up-down sequence having at least as many $U$'s as $D$'s. Going forward, we will treat a Dyck path and its up-down sequence as the same object.

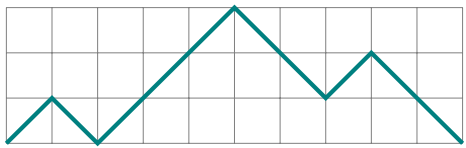
\begin{figure}[H]
\centering
\begin{tikzpicture}[scale=.6]
\draw[help lines] (0,0) grid (10,3);
\draw [ultra thick][teal] (0,0) -- (1,1) -- (2,0) -- (5,3)--(7,1)--(8,2)--(10,0);
\end{tikzpicture}
\caption{The Dyck path $\mathit{UDUUUDDUDD}$ of semilength 5.}
\end{figure}

\subsection{Three Classes of Permutations} We now highlight the structures of three particular classes of pattern-avoiding permutations:
\begin{itemize}
\item  A permutation $\pi$ avoids 132 and 231 if and only if $\pi=L1R$, where $L$ is decreasing and $R$ is increasing. We call this type of permutation $\textit{vee}$ after its resemblance to the letter V. We call the imaginary vertical line at 1, separating $L$ and $R$, the \textit{vertical} of the permutation.

\item A permutation $\pi$ avoids 132 and 312 if and only if the plot of $\pi$ is the 90 degree clockwise rotation of the plot of a \textit{vee} permutation. We call this type of permutation $svee$, which is short for sideways-vee. We call the rotation of the vertical the \textit{horizontal}.

\item A permutation $\pi$ avoids 231 and 312 if and only if $\pi$ is the sum of decreasing permutations; such a permutation is called \emph{layered}. Each decreasing permutation in the sum is called a $\textit{layer}$.

\end{itemize}

As we will see in later sections, these three types of permutations will be key for decomposing permutations of the form $\mathcal{U}_{2k+1}(\tau^{(1)}, \tau^{(2)})$. We will denote a vee, svee, and layered permutation, respectively, with the symbols: 

\[\begin{array}{l}\begin{tikzpicture}[scale=1]
\draw[gray] (0,0) rectangle (1,1);
\draw[thick](0.2,0.8)--(0.5,0.2)--(0.8,0.8);
\end{tikzpicture}\end{array}\quad\text{,}\quad\begin{array}{l}\begin{tikzpicture}[scale=1]
\draw[gray] (0,0) rectangle (1,1);
\draw[thick] (0.8,0.8)--(0.2,0.5)--(0.8,0.2);
\end{tikzpicture}\end{array}\quad\text{,}\quad    \text{and}\quad\begin{array}{l}\begin{tikzpicture}[scale=.333]
\draw[gray] (9.6,9.6) rectangle (12.6, 12.6);
\draw[gray] (9.6, 9.6) rectangle (10.6, 10.6);
\draw[gray] (10.6, 10.6) rectangle (11.6, 11.6);
\draw[gray] (11.6, 11.6) rectangle (12.6, 12.6);
\draw[thick]  (9.8, 10.4)--(10.4, 9.8);
\draw[thick]  (10.8, 11.4)--(11.4, 10.8);
\draw[thick]  (11.8, 12.4)--(12.4, 11.8);
\end{tikzpicture}\end{array}.\]

\subsection{Rethinking Some Past Results}\label{past}

In \cite{main}, Defant proves that
\[|\mathcal{U}_{2k+1}(132, 231)|=|\mathcal{U}_{2k+1}(132, 312)|=|\mathcal{U}_{2k+1}(231, 312)|=C_k.\]
His enumeration of the first class derives from the following lemma.

\begin{Lemma}[\cite{main}]\label{lemma:defantdyck}
There exists a bijection $\mathcal{U}_{2k+1}(132, 231)\to \mathcal{D}_k$.
\end{Lemma}

\begin{proof}
Given $\pi \in \mathcal{U}_{2k+1}(132, 231)$, we have from above that $\pi$ can be written as $\pi=L1R$, where $L$ is decreasing and $R$ is increasing. We construct the path $\Lambda=\Lambda_1\cdots\Lambda_{2k}$ as follows: let $\Lambda_i=U$ if $2k+2-i$ is an entry in $R$ and $\Lambda_i=D$ if $2k+2-i$ is an entry in $L$. The permutation $\pi$ is uniquely sorted and thus has exactly $k$ descents, causing $|L|=|R|=k$. This means that $\Lambda$ has $k$ ups and $k$ downs. Moreover, since $\pi$ has a CHC, every prefix of $\Lambda$ contains at least as many $U$'s as $D$'s. Thus, $\Lambda\in \mathcal{D}_k$. This map is easily reversible, so this is indeed a bijection.
\end{proof}

\begin{figure}[H]\label{dyck}
\[\begin{array}{l}
\begin{tikzpicture}[scale=.35]
\draw[fill](1,10) circle [radius=0.2];
\draw[fill] (2,6) circle [radius=0.2];
\draw[fill] (3,5) circle [radius=0.2];
\draw[fill] (4,3) circle [radius=0.2];
\draw[fill] (5,2) circle [radius=0.2];
\draw[fill] (6,1) circle [radius=0.2];
\draw[fill] (7,4) circle [radius=0.2];
\draw[fill] (8,7) circle [radius=0.2];
\draw[fill] (9,8) circle [radius=0.2];
\draw[fill] (10,9) circle [radius=0.2];
\draw[fill] (11,11) circle [radius=0.2];
\draw [thick] (1,10)--(1,11)--(11,11);
\draw [thick] (2,6)--(2,9)--(10,9);
\draw [thick] (3,5)--(3,8)--(9,8);
\draw [thick] (4,3)--(4,7)--(8,7);
\draw[thick] (5,2)--(5,4)--(7,4);
\end{tikzpicture}\end{array}
\begin{array}{l}
\begin{tikzpicture}[scale=.8]
\draw [thick] (0,0)--(1,0);
\draw [thick] (0.2, -0.2)--(0,0)--(0.2,0.2);
\draw [thick] (0.8, -0.2)--(1,0)--(0.8, 0.2);
\draw [white] (1.1,0)--(1.2,0);
\end{tikzpicture}
\end{array}\begin{array}{l} \begin{tikzpicture}[scale=.5]
\draw[help lines] (0,0) grid (10,3);
\draw [ultra thick][teal] (0,0) -- (1,1) -- (2,0) -- (5,3)--(7,1)--(8,2)--(10,0);
\end{tikzpicture}\end{array}\] 
\caption{The vee permutation 10\,6\,5\,3\,2\,1\,4\,7\,8\,9\,11 and its image under the bijection in Lemma \ref{lemma:defantdyck}, the Dyck path $\mathit{UDUUUDDUDD}$.}
\end{figure}
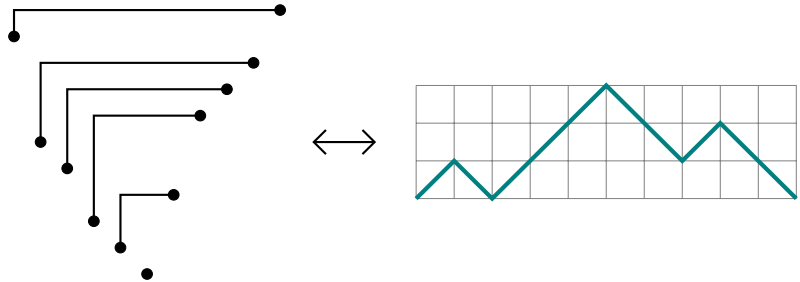

This simple bijection touches upon an intriguing connection between Dyck paths and uniquely sorted permutations. While Defant uses a different map for proving that: \[|\mathcal{U}_{2k+1}(132, 312)|=|\mathcal{U}_{2k+1}(231, 312)|=C_k,\] we note here that a method similar to that in the proof of Lemma \ref{lemma:defantdyck} can be used to make bijections with these sets.

\begin{Lemma}\label{svee}
There exists a bijection $\mathcal{U}_{2k+1}(132, 312)\to \mathcal{D}_k$.
\end{Lemma}

\begin{proof}
Given $\pi \in \mathcal{U}_{2k+1}(132, 312)$, we have from above that $\pi$ is svee. We construct the path $\Lambda=\Lambda_1\cdots\Lambda_{2k}$ as follows: for $2\leq i \leq 2k+1$, if $\pi_i>\pi_{i-1}$, let $\Lambda_{i-1}=D$, and if $\pi_i<\pi_{i-1}$, let $\Lambda_{i-1}=U$. Note that we are simply associating ascent tops with $D$ and descent bottoms with $U$. The permutation $\pi$ is uniquely sorted and thus has exactly $k$ descents, causing $|L|=|R|=k$. This means that $\Lambda$ has $k$ ups and $k$ downs. Moreover, since $\pi$ has a CHC, the NE endpoint of a hook (which must be an ascent top) comes after the SW endpoint of that same hook (which must be a descent top), and thus every prefix of $\Lambda$ contains at least as many $U$'s as $D$'s. Thus, $\Lambda\in \mathcal{D}_k$. This map is easily reversible since ascent tops must be above the horizontal and decent bottoms must be below, so this is indeed a bijection.
\end{proof}

\begin{Lemma}\label{layered}
There exists a bijection $\mathcal{U}_{2k+1}(231, 312)\to \mathcal{D}_k$.
\end{Lemma}

\begin{proof}
Given $\pi \in \mathcal{U}_{2k+1}(231, 312)$, we have from above that $\pi$ is layered. We construct the path $\Lambda=\Lambda_1\cdots\Lambda_{2k}$ as follows: for $2\leq i\leq 2k+1$, let $\Lambda_{i-1}=D$ if $\pi_i>\pi_{i-1}$, and let $\Lambda_{i-1}=U$ if $\pi_i<\pi_{i-1}$. Note that we are then simply associating ascent tops with $D$ and descent bottoms with $U$. The rest follows as in the proof of the previous lemma (only know ascent tops are the first entries in each layer besides the first).
\end{proof}

The above lemmas not only help us to rethink some of the results of \cite{main}, but demonstrate the simplest application of a method crucial to enumerating classes in this paper. More specifically, the condition that a uniquely sorted permutation of length $2k+1$ has $k$ descents is akin to Dyck paths of semilength $k$ having $k$ $U$'s, and the existence of a CHC is akin to every prefix of a Dyck path having at least as many $U$'s as $D$'s. In Sections \ref{motzkin} and \ref{schroder}, we will see how to translate this idea to more complex permutation classes and paths.

\section{Two Almost-Vee Classes}\label{almost}

In this section, we prove that, out of the eighteen conjectured classes, two are counted nicely by the binomial coefficient ${2k-1\choose k}$. 

\begin{Th}\label{modveethm} 
We have $|\mathcal{U}_{2k+1}(132,4312)|={2k-1\choose k}.$
\end{Th}
\begin{proof}

Consider the plot of some $\pi \in \mathcal{U}_{2k+1}(132,4312)$, and let $\rho$ be any consecutive subsequence of $\pi$, which, by definition, also avoids 132 and 4312.

Since $\rho$ avoids 132, we can decompose $\rho = \lambda\rho_m\mu$, where $\rho_m$ is the largest entry in $\rho$ and $\lambda$ is above $\mu$. Now we consider two cases.\\

\noindent\underline{Case 1:} The subsequence $\mu$ contains the pattern 12. Then $\lambda$ must not contain the pattern 21, since $\rho$ avoids 4312. This implies $\lambda$ is increasing. Moreover, since $\rho$ avoids 4312, $\mu$ must avoid 312, so $\mu$ avoids both 132 and 312 and thus is svee. Thus, we can write $\rho = I \ominus \tau$ where $I$ is increasing and $\tau$ is svee.\\

\noindent\underline{Case 2:} The subsequence $\mu$ is empty or decreasing. In this case, there are no clear restrictions on $\lambda$.\\

We first perform the above decomposition on $\pi$. In case 1, we are done. In case 2, we can repeat the same decomposition process on $\lambda$ instead of $\pi$. This can be repeated until we are in case 1 or until the pieces in the decomposition are empty. Note that in case 2 we add a point to the right and above the unknown portion of $\pi$ as well as a decreasing permutation to the right and below the unknown portion of $\pi$, which preserves a svee shape in $\pi$. In case 1 we add the skew sum of an increasing permutation $I$ and a svee $\tau$, which ends the decomposition process. However, if the size of $I$ is at least 2, the second element of $I$ is an ascent top but not a NE endpoint (since there are no descents to the left of it), contradicting Lemma \ref{partition}. Thus $I$ is a single point, so $\pi$ can be thought of as a svee permutation $\sigma$ preceded by a special point that lies above the horizontal of the svee. Thus $\pi=\pi_1\sigma$, where $\pi_1$ lies above $\sigma$'s horizontal (if $\pi_1$ lies \textit{immediately} above the horizontal, $\pi$ is simply svee). We call such a permutation a modified svee, or \textit{modsvee}, for short. See Figure \ref{modsvee} for an example of a modsvee permutation.

\begin{figure}[H]\label{modsvee}
\centering
\begin{tikzpicture}[scale=.5]
\draw[fill][red] (1,6) circle [radius=0.2];
\draw[fill][blue] (2,4) circle [radius=0.2];
\draw[fill][blue] (3,3) circle [radius=0.2];
\draw[fill][blue] (4,5) circle [radius=0.2];
\draw[fill][blue](5,2) circle [radius=0.2];
\draw[fill][blue](6,7) circle [radius=0.2];
\draw[fill][blue](7,8) circle [radius=0.2];
\draw[fill][blue](8,1) circle [radius=0.2];
\draw[fill][blue](9,9) circle [radius=0.2];
\draw [thick](1,6)--(1,8)--(7,8)--(7,9)--(9,9);
\draw[thick] (2,4)--(2,5)--(4,5)--(4,7)--(6,7);
\draw[thick][dashed][gray] (0.5,0.5) rectangle (8.5, 8.5);
\draw[thick][dashed][gray] (7.5,0.5) rectangle (8.5, 1.5);
\draw[thick][dashed][gray] (0.5,1.5) rectangle (6.5, 7.5);
\draw[thick][dashed][gray] (0.5,1.5) rectangle (5.5, 6.5);
\draw[thick][dashed][gray] (1.5,1.5) rectangle (5.5, 6.5);

\end{tikzpicture}
\caption{The modsvee permutation 643527819. The dashed boxes illustrate which pieces of the permutation are decomposed after each step in the decomposition process.}
\end{figure}

Following \cite{main}, we call a uniquely sorted permutation $\pi \in \mathcal{U}_{2k+1}$ \textit{nice} if the SW endpoint of the hook in the CHC of $\pi$ with NE endpoint $(2k+1, 2k+1)$ is $(1, \pi_1)$. For a nice $\pi\in \mathcal{U}_{2k+1}(132,4312)$, the hook $H_1$ connecting $(1, \pi_1)$ to $(2k+1, 2k+1)$ lies above every other hook of $\pi$ and thus does not interfere with the rest of the CHC. Thus, $\pi$ can be decomposed into the hook $H_1$, plus the svee permutation of size $2k-1$ whose CHC consists of the remaining hooks of $\pi$. Recall from Lemma \ref{svee} that there are $C_{k-1}$ such svee permutations. The NE endpoint of $H_1$ must be $(2k+1, 2k+1)$, whereas the height of the SW endpoint $(1, \pi_1)$ can be immediately below any of the $k$ ascent tops of the svee. Thus, there are $kC_{k-1}$ nice permutations in $\mathcal U_{2k+1}(132,4312)$.

Now, given any (not necessarily nice) permutation $\pi \in\mathcal{U}_{2k+1}(132,4312)$, we have that $(1,\pi_1)$ lies above the descending part of the svee shape following it, so 1 is a descent and thus $(1,\pi_1)$ is attached via a hook to one of the $k$ points in the ascending part of the svee. Fix some $1\leq j \leq k$ and let $(1, \pi_1)$ be attached to the $j^\text{th}$ point in the ascending part of svee, which we will call $(m, \pi_m)$. Since any hook with SW endpoint to the left of $(m, \pi_m)$ cannot intersect the hook with NE endpoint $(m, \pi_m)$, all of the hooks before this point are entirely contained to the left of it. Thus, the points $(1,\pi_1)$ through $(m, \pi_m)$ form a nice uniquely sorted modsvee permutation of size $2j+1$, of which there are $jC_{j-1}$, from above. It follows that $(m, \pi_m)= (2j+1,2j+1)$. The remaining hooks other than the one with SW endpoint $(2j+1,2j+1)$ lie to the right of the point $(2j+1,2j+1)$.  Thus, the points $(2j+1, 2j+1)$ through $(2k+1, 2k+1)$ form a uniquely sorted permutation of size $2(k-j)+1$ that is also svee shaped. From Lemma \ref{svee}, there are $C_{k-j}$ such permutations. Thus, for a given $j$, there are $jC_{j-1}C_{k-j}$ permutations. Summing over all possible $j$ gives us that $|\mathcal{U}_{2k+1}(132,4312)|=\sum_{j=1}^k jC_{j-1}C_{k-j}$. It is routine to show that $\sum_{j=1}^k jC_{j-1}C_{k-j}={2k-1\choose k}$, giving us the desired result. See Figure \ref{m} for an example of this decomposition. \end{proof}

\begin{figure}[H]
\centering
\begin{tikzpicture}[scale=.5]
\draw[fill][red] (1,6) circle [radius=0.2];
\draw[fill][blue] (2,4) circle [radius=0.2];
\draw[fill][blue] (3,3) circle [radius=0.2];
\draw[fill][blue] (4,5) circle [radius=0.2];
\draw[fill][blue](5,2) circle [radius=0.2];
\draw[fill][blue](6,7) circle [radius=0.2];
\draw[fill][blue](7,8) circle [radius=0.2];
\draw[fill][blue](8,1) circle [radius=0.2];
\draw[fill][blue](9,9) circle [radius=0.2];
\draw [thick](1,6)--(1,8)--(7,8)--(7,9)--(9,9);
\draw[thick] (2,4)--(2,5)--(4,5)--(4,7)--(6,7);
\draw[ultra thick][dashed][gray] (0.5,1.5) rectangle (7.5, 8.5);
\draw[ultra thick][dashed][gray] (6.5,0.5) rectangle (9.5, 9.5);
\end{tikzpicture}
\caption{The decomposition of the modsvee permutation 643527819 into a nice modsvee uniquely sorted permutation (on the left) and a svee uniquely sorted permutation (on the right).}
\label{m}
\end{figure}
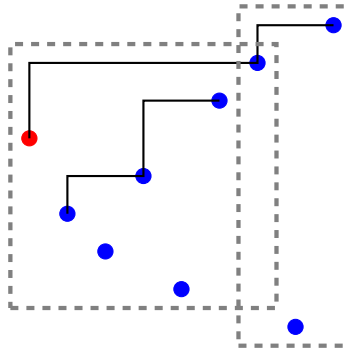

\begin{Th} We have
$|\mathcal{U}_{2k+1}(132,3421)|={2k-1\choose k}.$
\end{Th}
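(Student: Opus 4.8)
The plan is to mirror the proof of Theorem 4.1, replacing the pattern $4312$ by $3421$ and the modsvee structure by its transpose-analog, which I will call a \emph{modvee}: a vee permutation together with a single distinguished extra point. The two classes ultimately have the same cardinality because vees and svees are both counted by Catalan numbers (Lemmas 3.1 and 3.2). Note, however, that although taking inverses is a bijection between $\mathrm{Av}(132,4312)$ and $\mathrm{Av}(132,3421)$, it does not preserve the number of descents, and hence does not restrict to a bijection between the uniquely sorted subsets; so the structural analysis must be carried out afresh rather than imported from Theorem 4.1.

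First I would establish the structure of $\pi\in\mathcal{U}_{2k+1}(132,3421)$ using the same max-decomposition of a consecutive subsequence $\rho=\lambda\rho_m\mu$, where $132$-avoidance forces $\lambda$ to lie above $\mu$ and $\rho_m$ to be the maximum. The governing observations for $3421$-avoidance are: if $\lambda$ is nonempty and $\mu$ contains a descent, then a point of $\lambda$, the point $\rho_m$, and that descent form a $3421$ pattern, so $\lambda$ is empty or $\mu$ is increasing; and if $\mu$ is nonempty then $\lambda$ must avoid $231$ (else a $231$ in $\lambda$ followed by any point of $\mu$ gives $3421$), so $\lambda$ is a vee. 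Iterating the decomposition, exactly as in Theorem 4.1 but with the roles of the decreasing and increasing parts interchanged, I would show that all but at most one of the steps merely builds up a vee, while the single exceptional step contributes one distinguished point; combined with Lemma 2.2 this forces $\pi$ to be a modvee.

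Next I would define \emph{nice} permutations as in Theorem 4.1 (the SW endpoint of the hook terminating at $(2k+1,2k+1)$ is $(1,\pi_1)$) and count the nice modvees of size $2j+1$. For such a permutation the hook $H_1$ lies above the rest of the configuration, so deleting its two endpoints leaves a vee of size $2j-1$; by Lemma 3.1 there are $C_{j-1}$ of these, and the SW endpoint $(1,\pi_1)$ of $H_1$ may sit immediately below any of the $j$ relevant ascent tops, giving $jC_{j-1}$ nice modvees. Then, for a general $\pi$ in the class, $(1,\pi_1)$ is a descent top whose hook attaches to one of the ascending points $(m,\pi_m)$; cutting at $(m,\pi_m)=(2j+1,2j+1)$ splits $\pi$ into a nice modvee of size $2j+1$ (of which there are $jC_{j-1}$) and a vee of size $2(k-j)+1$ (of which there are $C_{k-j}$ by Lemma 3.1). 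Summing over $j$ gives $|\mathcal{U}_{2k+1}(132,3421)|=\sum_{j=1}^{k} jC_{j-1}C_{k-j}={2k-1\choose k}$, where the final identity is the one already verified in Theorem 4.1.

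The main obstacle is the structural step. Because the uniquely sorted condition is not transpose-invariant, the hook geometry of $3421$-avoiders is not literally the transpose of that of the $4312$-avoiders, so I must check directly that the distinguished point of a modvee behaves under the canonical hook configuration exactly as $(1,\pi_1)$ does for a modsvee. Concretely, the delicate points are verifying that a modvee carries genuinely a single extra point (rather than a larger exceptional block), that the cut at $(2j+1,2j+1)$ leaves the right-hand piece a pure vee while placing the extra point inside the left-hand nice piece, and that the number of admissible heights for $(1,\pi_1)$ is exactly $j$. Once these are confirmed, the enumeration reduces to the same Catalan convolution as in Theorem 4.1.
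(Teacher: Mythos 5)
Your structural analysis (the factorization $\rho=\lambda\rho_m\mu$ forced by $132$-avoidance, the $3421$-observations, and the use of Lemma 2.2 to shrink the exceptional block to a single point) matches the paper's and is fine. The gap is in the counting stage, and it occurs exactly where the modvee geometry stops mirroring the modsvee geometry. In a modsvee the distinguished point \emph{is} $(1,\pi_1)$, so deleting the two endpoints of $H_1$ from a nice modsvee genuinely leaves a plain svee. In a modvee the distinguished point sits in the interior of the plot (below the vee, to the right of the vertical), so it survives that deletion: removing $(1,\pi_1)$ and $(2k+1,2k+1)$ from a nice modvee leaves a modvee of size $2k-1$, not a vee. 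The paper's own Figure 7 is a counterexample to your claim: $853241679$ is a nice element of $\mathcal{U}_9(132,3421)$, and deleting $(1,8)$ and $(9,9)$ leaves the normalization of $5\,3\,2\,4\,1\,6\,7$, which contains the subsequence $2\,4\,1$ (pattern $231$) and hence is not a vee. So the count ``$C_{j-1}$ vees times $j$ admissible heights $=jC_{j-1}$'' does not follow: the residual objects are smaller modvees (of which there are $\binom{2j-3}{j-1}$, not $C_{j-1}$), the number of admissible heights for $\pi_1$ varies with the residual modvee (it must lie above the top of the old left arm), and not every height yields a sorted permutation, so a genuinely different argument is needed to get $jC_{j-1}$. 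A related problem afflicts your non-nice split: the last hook straddles the distinguished point, which can land in the \emph{right}-hand piece of the cut (in $32415\in\mathcal{U}_5(132,3421)$ the hook from $(1,3)$ attaches to $(3,4)$ while the value $1$ sits at position $4$), so the cut point is not $(2j+1,2j+1)$ and the right-hand factor is not literally a vee on the top values; one must argue separately that the value-interleaving between the two normalized pieces is forced.

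The reason you give for rejecting the paper's route is also mistaken, and it matters because that route is what closes the argument. By Lemma 4.3 of \cite{main}, quoted in the paper, a $132$-avoiding permutation has the same number of descents as its inverse; the paper then checks that a modvee and its inverse modsvee have the same relative order of ascent tops and descent bottoms (read bottom-to-top versus left-to-right), hence one has a CHC if and only if the other does. Thus inversion \emph{does} restrict to a bijection $\mathcal{U}_{2k+1}(132,3421)\to\mathcal{U}_{2k+1}(132,4312)$, and the paper concludes by citing Theorem 4.1. Your plan replaces this short transfer argument with a direct Catalan convolution, but as written the convolution rests on the false ``vee of size $2j-1$'' claim above.
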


\begin{proof}

The permutation obtained by reflecting the plot of $\pi$ through the line $y=x$ is called the \textit{inverse}\footnote{When $\pi$ is considered as an element of the symmetric group, we have that $\pi^{-1}$ is indeed the algebraic inverse of $\pi$.} of $\pi$, denoted $\pi^{-1}$. Note that $132$ is its own inverse and that the inverse of $3421$ is $4312$. Therefore, a permutation avoids $132$ and $3421$ if and only if its inverse avoids $132$ and $4312$. Also, it is proven in Lemma 4.3 in \cite{main} that a permutation that avoids 132 has the same number of descents as its inverse. It is natural, then, to conjecture that permutations in $\mathcal{U}_{2k+1}(132,3421)$ are simply the inverses of those in $\mathcal{U}_{2k+1}(132,4312)$. Note that this does not hold for general permutations; for example, the permutation 31425 is uniquely sorted, but its inverse is 24135, which is not uniquely sorted. 

Consider some $\pi \in \mathcal{U}_{2k+1}(132,3421)$ and some consecutive subsequence of $\pi$, called $\rho$. Again, since $\rho$ avoids 132, we can decompose $\rho=\lambda \rho_m \mu$, where $\rho_m$ is the largest entry in $\rho$ and $\lambda$ is above $\mu$. There are three cases.\\

\underline{Case 1:} The subsequences $\lambda$ and $\mu$ are both nonempty. Then $\lambda\rho_{m}$ contains the pattern 12, so $\mu$ cannot contain the pattern 21, so $\mu$ is increasing. Then since nonempty $\mu$ lies below and to the right of $\lambda$, we have that $\lambda$ must avoid 231 in order for $\rho$ to avoid 3421. Thus $\lambda$ avoids both 132 and 231 and therefore is vee, giving us the decomposition $\rho= \tau\ominus I$, where $\tau$ is vee and $I$ is increasing.\\

\underline{Case 2:} The subsequence $\lambda$ is empty, and there are no clear restrictions on $\mu$.\\

\underline{Case 3:} The subsequence $\mu$ is empty, and there are no clear restrictions on $\lambda$.\\

We first perform the above decomposition on $\pi$. If we are in case 1, the structure of $\pi$ is determined to a degree to which we are satisfied, and we are done. In the other cases, we can repeat the same decomposition process on the unknown portion of the permutation until we end up in case 1 or all of the unknown portions are empty. Note that cases 2 and 3 preserve a vee shape in $\pi$, whereas in case 1 we add the skew sum of a vee permutation and an increasing permutation $I$, which ends the decomposition process. However, if the size of $I$ is at least 2, the second element of $I$ is an ascent top but not a NE endpoint (since there are no descents below it), contradicting Lemma \ref{partition}. Thus $I$ is a single point, so the result of this process is the decomposition consisting of a vee permutation, plus one special point below the vee that is to the right of the vertical of the vee (if the point is $\textit{immediately}$ to the right the vertical, $\pi$ is simply vee). We call such a permutation a modified vee, or $\textit{modvee}$ for short. 

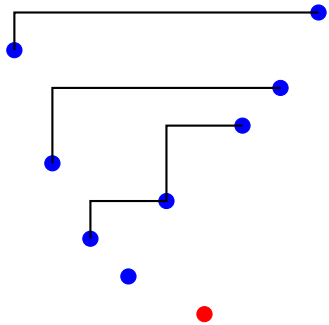
\begin{figure}[H]\label{modvee}
\centering
\begin{tikzpicture}[scale=.5]
\draw[fill][blue] (1,8) circle [radius=0.2];
\draw[fill][blue] (2,5) circle [radius=0.2];
\draw[fill][blue] (3,3) circle [radius=0.2];
\draw[fill][blue] (4,2) circle [radius=0.2];
\draw[fill][blue](5,4) circle [radius=0.2];
\draw[fill][red](6,1) circle [radius=0.2];
\draw[fill][blue](7,6) circle [radius=0.2];
\draw[fill][blue](8,7) circle [radius=0.2];
\draw[fill][blue](9,9) circle [radius=0.2];
\draw [thick](3,3)--(3,4)--(5,4)--(5,6)--(7,6);
\draw [thick] (2,5)--(2,7)--(8,7);
\draw [thick] (1,8)--(1,9)--(9,9);
\end{tikzpicture}
\caption{The modvee permutation 853241679, which is the inverse of the permutation in Figure \ref{m}.}
\end{figure}

As hoped for, it follows from the definition of modvee and modsvee that the inverse of a permutation in one class is in the other class. Moreover, note that when we reflect a modvee permutation, all of the points to the left of the vertical that were descent tops become descent bottoms, the points to the right of the vertical that were ascent tops remain ascent tops, and the special point becomes the special point in the resulting modsvee permutation. Thus, the relative order of ascent tops and descent bottoms is the same in the modvee permutation, read bottom to top, as it is in its image modsvee permutation, read left to right. Then, by the logic in the lemmas in Section \ref{past}, a modvee permutation has a CHC if and only if its inverse does. Thus, inversion does indeed define a bijection between the two classes, giving us that $|\mathcal{U}_{2k+1}(132,3421)|=|\mathcal{U}_{2k+1}(132,4312)|={2k-1\choose k}$. \end{proof}

\section{Bijections with S-Motzkin Paths}\label{motzkin}

A variant of a Dyck path is a \textit{Motzkin path} of length $k$, which is a path from $(0,0)$ to $(k,0)$ that consists of up steps, down steps, and (1,0) steps (called \textit{east steps}) and at no point crosses below the horizontal axis. In the up-down sequence of a Motzkin path, we write the letter $E$ for east steps. \cite{helmut} defined a specific subclass of Motzkin paths.

\begin{Def}
An $\textbf{S}$-Motzkin path is a Motzkin path with $k$ up steps, $k$ down steps, and $k$ east steps such that:
\begin{enumerate}
    \item The first step is east.
    \item Between every two east steps is exactly one up step.
    \item The $i^\text{th}$ down step must occur after at least $i$ east steps and $i$ up steps. This is equivalent to the condition that the path does not cross the horizontal axis.
\end{enumerate}
\end{Def}

Interestingly, this definition arose from a 2018 International Mathematics Competition question proposed in \cite{imo} about a frog moving through three-space.

\begin{figure}[H]
\centering
\begin{tikzpicture}[scale=.6]
\draw[help lines] (0,0) grid (12,2);
\draw [ultra thick][teal] (0,0) -- (1,0)--(2,1)--(3,1)--(4,2)--(5,1)--(6,1)--(7,0)--(8,1)--(9,1)--(10,2)--(12,0);
\end{tikzpicture}
\caption{The $\bf{S}$-Motzkin path $\mathit{EUEUDEDUEUDD}$ of length 12.}
\end{figure}

Let $\mathcal M_k^{\bf S}$ denote the set of $\bf{S}$-Motzkin paths of length $3k$.

\begin{Th}[\cite{helmut}]
We have $|\mathcal M_k^{\bf S}|=\frac{1}{2k+1}{3k\choose k}$.
\end{Th}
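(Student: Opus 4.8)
The plan is to strip an $\mathbf S$-Motzkin path down to a single free parameter --- the placement of its down steps --- and then count the resulting constrained objects by the cycle lemma.

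First I would pin down the rigid skeleton forced by conditions (1) and (2). Reading off only the east and up steps, condition (2) says that consecutive east steps are separated by exactly one up step, and condition (1) says this subword begins with $E$; since there are $k$ of each letter, the $E$/$U$-subword must be exactly $(EU)^k$. Thus every $\mathbf S$-Motzkin path is obtained from the frame $E\,U\,E\,U\cdots E\,U$ by inserting the $k$ identical down steps into the $2k$ gaps lying after the $2k$ frame letters --- no down step may precede the first letter, by condition (1). Writing $c_m\ge 0$ for the number of down steps dropped into the gap after the $m$-th frame letter, a path is encoded by a vector $(c_1,\dots,c_{2k})$ with $\sum_m c_m=k$, and condition (3) becomes the statement that the running height never goes negative.

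Next I would convert this into a lattice-path condition. The frame alone reaches height $\lfloor m/2\rfloor$ after its $m$-th letter, and the lowest point of a block of inserted down steps occurs at its end, so non-negativity is equivalent to the $2k$ inequalities $c_1+\cdots+c_m\le \lfloor m/2\rfloor$ for all $m$. Now send each east-or-up step to a unit east step and each down step to a unit north step; the path above maps to a monotone lattice path from $(0,0)$ to $(2k,k)$, and these inequalities say precisely that the path stays weakly below the line $y=x/2$. Hence $|\mathcal M_k^{\mathbf S}|$ equals the number of sequences of $2k$ east and $k$ north steps every prefix of which satisfies $x\ge 2y$.

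Finally I would count these by the Dvoretzky--Motzkin cycle lemma. Assign charge $+1$ to each east step and $-2$ to each north step; staying weakly below $y=x/2$ is exactly the condition that all partial charge sums are $\ge 0$, and such a path to $(2k,k)$ has total charge $0$. Consider instead all words of length $3k+1$ with $2k+1$ charges $+1$ and $k$ charges $-2$; each has total charge $1$ and all charges $\le 1$, so the cycle lemma gives that exactly one of its $3k+1$ cyclic rotations has every partial sum strictly positive. Since the total charge is $1$, no such word is periodic, so every cyclic class has full size $3k+1$ and the number of strictly-positive words is $\tfrac{1}{3k+1}\binom{3k+1}{k}$. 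A strictly-positive word must begin with a $+1$, and deleting this leading step is a bijection onto the length-$3k$ words with total charge $0$ and all partial sums $\ge 0$, i.e.\ onto the paths counted above; a short computation gives $\tfrac{1}{3k+1}\binom{3k+1}{k}=\tfrac{1}{2k+1}\binom{3k}{k}$, as claimed.

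I expect the main obstacle to be this last step: carefully setting up the append/rotate/delete correspondence so that weak inequalities with total charge $0$ match strict inequalities with total charge $1$, and verifying the aperiodicity that licenses the clean $\tfrac{1}{3k+1}$ fraction. The earlier reductions --- the forced frame and the translation of condition (3) into $c_1+\cdots+c_m\le\lfloor m/2\rfloor$ --- are routine, especially since the definition already identifies condition (3) with non-negativity.
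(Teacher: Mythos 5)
Your proof is correct, and it takes a genuinely different route from the one the paper relies on. The paper simply cites Prodinger and Selkirk, whose argument is a bijection between $\mathbf S$-Motzkin paths of length $3k$ and ternary trees on $k$ nodes, with the trees then counted by the Fuss--Catalan numbers. You instead give a self-contained enumeration: the observation that conditions (1) and (2) force the $E/U$-subword to be exactly $(EU)^k$ reduces the whole structure to a composition $(c_1,\dots,c_{2k})$ of $k$ recording where the down steps fall, condition (3) becomes the ballot-type constraint $c_1+\cdots+c_m\le\lfloor m/2\rfloor$, and the resulting lattice paths weakly below $y=x/2$ are counted by the Dvoretzky--Motzkin cycle lemma. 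All the steps check out: the frame argument correctly accounts for all $k$ up steps (the $k$-th is forced after the last east step since none may precede the first), the minimum of the height within a gap is indeed attained at the end of the block of down steps, the total charge $1$ rules out periodic words so each cyclic class has full size $3k+1$, and the prepend-a-$(+1)$ correspondence correctly converts weak nonnegativity into strict positivity. What your approach buys is a short, elementary, purely enumerative proof that keeps the paper self-contained; what the tree bijection buys is more structural information (an explicit correspondence with a classical Fuss--Catalan family), which is why Prodinger and Selkirk favor it. Either would be acceptable here; yours could even replace the citation if one wanted the paper to stand alone.
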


Prodinger and Selkirk prove this with a bijection between $\bf{S}$-Motzkin paths of length $3k$ and ternary trees on $k$ nodes, the latter of which is counted by the so-called 3-Catalan numbers $\frac{1}{2k+1}{3k\choose k}$; see \cite{tree}. These paths will be crucial for enumerating five classes of uniquely sorted permutations counted by the same formula.

\begin{Th}\label{stair-svee}
We have $|\mathcal{U}_{2k+1}(312, 2431)|=\frac{1}{2k+1}{3k\choose k}$.
\end{Th}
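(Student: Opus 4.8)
The plan is to follow the template of Section 3 and Theorem 4.1: first determine the exact shape of a permutation in $\mathcal{U}_{2k+1}(312,2431)$ from pattern avoidance, and then read a lattice path off the plot so that the three defining conditions of an $\mathbf S$-Motzkin path correspond to "exactly $k$ descents" (Theorem 1.1) and "has a canonical hook configuration" (Proposition \ref{CHC}). For the structural step I would decompose around the \emph{minimum}. Writing $\pi=\lambda\,m\,\rho$ with $m$ the smallest entry, $312$-avoidance forces every entry of $\lambda$ to be smaller than every entry of $\rho$, since otherwise $m$ would be the "$1$" of a $312$. Because $2431$ contains $132$, any $132$ lying entirely in $\lambda$ extends to a $2431$ by appending $m$ (the three entries together with the globally minimal, rightmost $m$ normalize to $2431$); and because $\lambda<\rho$ with $m$ minimal, no $2431$ can straddle the three pieces. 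Hence the only new constraint is that $\lambda$ \emph{avoids $132$}, so $\lambda$ avoids both $132$ and $312$ and is a svee, while $\rho\in\mathrm{Av}(312,2431)$.

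Iterating on $\rho$ yields the normal form
\[
\pi=\alpha_1\,\mu_1\,\alpha_2\,\mu_2\cdots\alpha_t\,\mu_t,
\]
a direct sum of blocks $\alpha_i\ominus\bullet$ in which each $\alpha_i$ is a svee, the $\mu_i$ are the successive minima with $\mu_t=2k+1$, and the value ranges interleave as $\mu_1<\alpha_1<\mu_2<\alpha_2<\cdots$; this is the \emph{stair-svee} structure. Only $312,2431$-avoidance is used at this stage, and unique sortedness is then imposed through Theorem 1.1 and Proposition \ref{CHC}.

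For the bijection I would read a word off the plot in the spirit of Lemmas 3.2 and 3.3: traverse the points from left to right and emit $E$ at each descent top, $U$ at each descent bottom, and $D$ at each ascent top (equivalently, at each NE endpoint, by Lemma \ref{partition}). Since $\pi$ has exactly $k$ descents and, by Lemma \ref{partition}, exactly $k$ ascent tops, this gives $k$ copies of each letter and a word of length $3k$. Unique sortedness forces $\pi_1$ to be a descent top, for if $\pi_1$ were an ascent bottom then $\pi_2$ would be an ascent top that is a NE endpoint with no descent top to its left, contradicting Lemma \ref{partition}; thus the first letter is $E$, giving condition (1). I would then verify that the stair-svee form leaves exactly one descent bottom between consecutive descent tops, giving condition (2), and that a CHC exists if and only if the path never dips below the axis and satisfies condition (3).

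The main obstacle is that a single point can be both a descent top and an ascent top (a peak) or a descent top and a descent bottom (inside a descending run), so it emits two letters, and the \emph{order} of those letters is not determined by the up-down shape alone. Indeed $21435$ and $32415$ share the same peak/valley pattern and even the same hook endpoints, yet are distinct permutations, so a purely local reading is not injective; the order in which one "opens" a hook ($E$) and "closes" a hook ($D$) at a peak must instead be governed by the relative depths of the adjacent valleys, that is, by how the hooks nest in the CHC. Pinning down this ordering rule, showing that under it the image is \emph{exactly} $\mathcal M_k^{\mathbf S}$, and exhibiting the inverse reading is the technical heart of the argument: it amounts to translating "$\pi$ has a canonical hook configuration" into the non-crossing condition together with condition (3), that the $i$-th down step occurs only after $i$ east steps and $i$ up steps. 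Once that equivalence is in hand, reversibility of the reading yields the bijection and hence $|\mathcal U_{2k+1}(312,2431)|=|\mathcal M_k^{\mathbf S}|=\frac{1}{2k+1}\binom{3k}{k}$.
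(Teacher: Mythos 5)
Your structural step matches the paper's exactly: decompose around the minimum, use $312$-avoidance to split the plot into $\lambda\,m\,\rho$ with $\lambda$ below $\rho$, use $2431$-avoidance (via appending the minimum to any $132$ in $\lambda$) to force $\lambda$ to be svee, and iterate to reach the stair-svee normal form $(\lambda^{(1)}\ominus 1)\oplus\cdots\oplus\lambda^{(\ell)}$. One small correction: the last block carries no trailing minimum; the final entry $2k+1$ sits at the top of the last svee rather than being a separate $\mu_t$ below $\alpha_t$.

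The bijection, however, is not completed, and you say so yourself: the rule ``emit $E$ at each descent top, $U$ at each descent bottom, $D$ at each ascent top'' is ill-defined at points playing two roles, and you explicitly defer ``pinning down this ordering rule'' as the technical heart of the argument. That deferred step is where the actual content lies, so as written this is a genuine gap rather than a proof. The paper avoids the local-reading problem altogether: it starts from the fixed template $\Lambda=EUEU\cdots EU$ (one $EU$ pair per descent, which already guarantees conditions (1) and (2) of the $\mathbf{S}$-Motzkin definition) and, for each ascent top $a_i$ with $n_i$ descent bottoms to its left, inserts a $D$ immediately after the $n_i$-th $U$ if $a_i$ lies in the same block as its left neighbor, and immediately after the $(n_i+1)$-th $E$ if $a_i$ opens a new block. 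Condition (3) then follows from the CHC exactly as in Lemmas 3.1--3.3, and invertibility reduces to checking that the position of each $D$ relative to the neighboring $E$ and $U$ recovers which type of ascent produced it; the single potentially ambiguous configuration $UDED$ is excluded because every block of a stair-svee permutation ends in a descent, so an ascent top preceded by a point of the same block cannot be immediately followed by an ascent top starting a new block. To salvage your left-to-right reading you would need to prove an analogous disambiguation statement, which is essentially the same work the paper does with its insertion rule.
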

\begin{proof}

Consider the plot of some $\pi \in \mathcal{U}_{2k+1}(312, 2431)$ and some consecutive subsequence of $\pi$, called $\rho$. Since $\rho$ avoids 312, we can write $\rho=\lambda \rho_{m} \mu$, where $\rho_{m}$ is the smallest entry in $\rho$ and  $\lambda$ is below $\mu$. Since $\rho$ avoids 2431 and $\rho_{m}$ lies below and to the right of $\lambda$, we have that $\lambda$ avoids both 312 and 132 and thus is svee.

We first perform this decomposition on $\pi$. Once we find svee $\lambda$, we can repeat this same decomposition process on $\mu$ instead of $\pi$ and continue to repeat until the subsequences are empty. The result is the decomposition $\pi =( \lambda^{(1)}\ominus1)\oplus (\lambda^{(2)}\ominus 1)\oplus \cdots \oplus(\lambda^{(l-1)}\ominus1)\oplus(\lambda^{(\ell)})$, where each $\lambda^{(i)}$ is svee. We call a permutation of this form $\textit{stair-svee}$, and we call each $\lambda^{(i)}\ominus1$ (or, for the last $i$, $\lambda^{(\ell)}$) a $\textit{block}$ of $\pi$. By default, we assume blocks are maximal: that is, starting at the leftmost block in the permutation, each block is as large as it can be while still the correct shape.

\begin{figure}[H]
\centering
\begin{tikzpicture}[scale=.3]
\draw[gray] (0,0) rectangle (4,4);
\draw [thick] (2.7, 1.3)--(0.3, 2.5)--(2.7, 3.7);
\draw[fill] (3.5, 0.5) circle [radius=0.1];
\draw [gray](4,4) rectangle (8,8);
\draw [thick] (6.7, 5.3)--(4.3, 6.5)--(6.7, 7.7);
\draw[fill] (7.5, 4.5) circle [radius=0.1];
\draw[fill] (8.4,8.4) circle [radius=0.025];
\draw[fill] (8.8, 8.8) circle [radius=0.025];
\draw[fill] (9.2,9.2) circle [radius=0.025];
\draw[gray] (9.6,9.6) rectangle (12.6, 12.6);
\draw [thick] (12.3, 9.9)--(9.9, 11.1)--(12.3, 12.3);

\end{tikzpicture}
\caption{The structure of a stair-svee permutation.}
\end{figure}
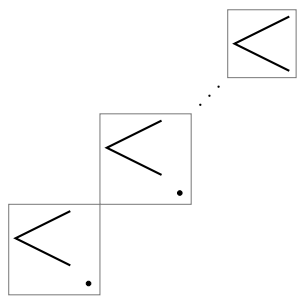

Now, we define a bijection from $\mathcal{U}_{2k+1}(312, 2431)$ to $\mathcal M_k^{\bf S}$ via the following rule. Let $\Lambda$ be the path $EUEU\ldots EU$ containing alternating $k$ $E$'s and $k$ $U$'s. Given $\pi\in\mathcal{U}_{2k+1}(312,2431)$, let $a_1, \ldots, a_k$ be the ascent tops of $\pi$, ordered from left to right, and let $n_i$ be the number of descent bottoms to the left of $a_i$. For $1\leq i \leq k$, if $a_i$ is in the same block as the point immediately to the left of it, insert a $D$ immediately after the $n_i^\text{th}$ $U$ in $\Lambda$.  If $a_i$ is not in the same block as the point immediately to the left of it, insert a $D$ immediately after the $n_{i}+1^\text{th}$ $E$ in $\Lambda$.

By construction, $\Lambda$ begins with $E$ and alternates between $E$ and $U$. Note that by the same logic as in the proofs of the lemmas in Section \ref{past}, the $k^\text{th}$ $D$ in $\Lambda$ appears after at least $k$ $U$'s and $k$ $E$'s. The particular placement of $D$'s allows us to recover the type of ascent that created it. The only possible issue is that with a path containing $\mathit{UDED}$ it is ambiguous which type of ascent came first. However, since $\pi$ is stair-svee, in which each block must end in a descent, an ascent top that is immediately preceded by a point in the same block cannot be followed by an ascent top in a new block. Therefore this case is actually unambiguous. Thus, this map is indeed a bijection and $|\mathcal{U}_{2k+1}(312, 2431)|=| \mathcal M_k^{\bf S}|=\frac{1}{2k+1}{3k\choose k}$.\end{proof}

\begin{figure}[H]\label{stair-svee bij}
\[\begin{array}{l}
\begin{tikzpicture}[scale=.35]
\draw[fill](1,3) circle [radius=0.2];
\draw[fill] (2,2) circle [radius=0.2];
\draw[fill] (3,4) circle [radius=0.2];
\draw[fill] (4,1) circle [radius=0.2];
\draw[fill] (5,9) circle [radius=0.2];
\draw[fill] (6,8) circle [radius=0.2];
\draw[fill] (7,7) circle [radius=0.2];
\draw[fill] (8,10) circle [radius=0.2];
\draw[fill] (9,11) circle [radius=0.2];
\draw[fill] (10,6) circle [radius=0.2];
\draw[fill] (11,5) circle [radius=0.2];
\draw[fill] (12,12) circle [radius=0.2];
\draw[fill] (13,13) circle [radius=0.2];
\draw [thick] (10,6)--(10,12)--(12,12);
\draw [thick] (1,3)--(1,4)--(3,4)--(3,9)--(5,9)--(5,11)--(9,11)--(9,13)--(13,13);
\draw [thick] (6,8)--(6,10)--(8,10);
\draw[dashed][thick][gray] (0.5,0.5) rectangle (4.5, 4.5);
\draw[dashed][thick][gray] (4.5,4.5) rectangle (13.5, 13.5);
\end{tikzpicture}\end{array}
\begin{array}{l}
\begin{tikzpicture}[scale=.7]
\draw [thick] (0,0)--(1,0);
\draw [thick] (0.2, -0.2)--(0,0)--(0.2,0.2);
\draw [thick] (0.8, -0.2)--(1,0)--(0.8, 0.2);
\draw [white] (1.1,0)--(1.2,0);
\end{tikzpicture}
\end{array}\begin{array}{l} \begin{tikzpicture}[scale=.45]
\draw[help lines] (0,0) grid (18,2);
\draw [ultra thick][teal] (0,0) -- (1,0)--(2,1)--(3,0)--(4,0)--(5,1)--(6,1)--(7,0)--(8,1)--(9,1)--(10,2)--(12,0)--(13,0)--(14,1)--(15,1)--(16,2)--(18,0);
\end{tikzpicture}\end{array}\] 
\caption{The stair-svee permutation 3\,2\,4\,1\,9\,8\,7\,10\,11\,6\,5\,12\,13 and its image under the bijection in Theorem \ref{stair-svee}, the $\bf{S}$-Motzkin path $\mathit{EUDEUEDUEUDDEUEUDD}$.}
\end{figure}

\begin{Th}\label{stair-layered}
We have $|\mathcal{U}_{2k+1}(312, 3421)|=\frac{1}{2k+1}{3k\choose k}$.
\end{Th}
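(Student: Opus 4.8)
The plan is to mirror the proof of Theorem \ref{stair-svee}, with layered permutations playing the role that svee permutations played there. First I would take $\pi \in \mathcal{U}_{2k+1}(312,3421)$ together with an arbitrary consecutive subsequence $\rho$. Since $\rho$ avoids $312$, the decomposition used in Theorem \ref{stair-svee} applies verbatim: $\rho = \lambda\rho_m\mu$ with $\rho_m$ the smallest entry and $\lambda$ below $\mu$. The only new ingredient is the $3421$-avoidance. Because $\rho_m$ lies below and to the right of $\lambda$, any occurrence of $231$ in $\lambda$ would combine with $\rho_m$ to produce a $3421$ pattern: the three entries of the $231$ normalize to the ``$342$'' and $\rho_m$ supplies the trailing ``$1$''. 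Hence $\lambda$ avoids $231$, and as it already avoids $312$, the subsequence $\lambda$ is layered. (This parallels Theorem \ref{stair-svee}, where $2431$-avoidance forced $\lambda$ to avoid $132$, making it svee.) Iterating the decomposition on $\mu$ exactly as before yields the analogous \emph{stair-layered} structure $\pi = (\lambda^{(1)}\ominus 1)\oplus\cdots\oplus(\lambda^{(\ell-1)}\ominus 1)\oplus\lambda^{(\ell)}$ with each $\lambda^{(i)}$ layered, and I would again take the blocks to be maximal.

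Next I would build the bijection with $\mathcal M_k^{\bf S}$ using the identical rule from Theorem \ref{stair-svee}: start from $\Lambda = EUEU\cdots EU$, list the ascent tops $a_1,\dots,a_k$ from left to right, let $n_i$ count the descent bottoms to the left of $a_i$, and insert each $D$ either after the $n_i$-th $U$ or after the $(n_i+1)$-th $E$ according to whether $a_i$ shares a block with the point immediately to its left. The observation that legitimizes reusing this rule is that within a single layered block the ascent tops and descent bottoms, read left to right, form a Dyck path by Lemma 3.3, exactly as they do within a svee block by Lemma 3.2. Moreover the correspondence between the existence of a CHC and the non-crossing (prefix) condition rests only on the general facts that ascent tops are NE endpoints (Lemma \ref{partition}) and that a hook's NE endpoint follows its SW endpoint; these are insensitive to the svee/layered distinction. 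Consequently the abstract data the bijection consumes, namely the interleaving of within-block $U/D$ steps with the block-boundary $E$ steps, is the same as in the stair-svee case, so the verifications that $\Lambda\in\mathcal M_k^{\bf S}$ and that the map is reversible carry over.

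The one point that genuinely demands re-checking is the resolution of the $UDED$ ambiguity, which in Theorem \ref{stair-svee} used that each block ends in a descent. I would confirm this persists for layered blocks: a block $\lambda^{(i)}\ominus 1$ ends with the appended minimum point, a descent bottom, so every non-final block ends in a descent; the final block $\lambda^{(\ell)}$ must terminate at $(2k+1,2k+1)$ by Lemma \ref{partition}, which forces its top layer to be the single point $(2k+1,2k+1)$ and causes no ambiguity since no block follows it. Granting this, an ascent top preceded by a point of its own block cannot be immediately followed by an ascent top opening a new block, so the $D$-placements unambiguously record the type of each ascent and the map is a bijection, giving $|\mathcal{U}_{2k+1}(312,3421)| = |\mathcal M_k^{\bf S}| = \frac{1}{2k+1}{3k\choose k}$.

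I expect the main obstacle to be exactly this last verification, together with confirming that in the stair-layered arrangement the CHC hooks connect ascent tops to descent tops without crossing block boundaries in a manner that would spoil the prefix correspondence; in other words, checking that ``layered'' behaves like ``svee'' at the level of the global hook configuration and not merely block by block. A possibly shorter alternative would be to give a direct bijection $\mathcal{U}_{2k+1}(312,3421)\to\mathcal{U}_{2k+1}(312,2431)$ that fixes the block boundaries and, within each block, replaces a layered permutation by the svee permutation with the same ascent-top/descent-bottom sequence, composing the bijections of Lemmas 3.2 and 3.3 through $\mathcal{D}_j$; the content to verify there is that this replacement preserves both unique sortedness and the stair decomposition.
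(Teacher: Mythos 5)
Your decomposition step is correct and matches the paper exactly: $3421$-avoidance forces $\lambda$ to avoid $231$, so $\lambda$ is layered, and iterating gives the stair-layered form $(\lambda^{(1)}\ominus 1)\oplus\cdots\oplus\lambda^{(\ell)}$. The gap is in your claim that the insertion rule from Theorem \ref{stair-svee} applies \emph{verbatim}. The structural difference you need to confront is not the $UDED$ ambiguity but the tail: in the stair-svee case the increasing tail is always absorbed into the final block by maximality (appending an increasing run to a svee-skew-sum-$1$ block yields a svee block), whereas in the stair-layered case the tail can be its own block --- e.g.\ in $3\,2\,4\,1\,8\,7\,6\,9\,11\,10\,5\,12\,13$ the suffix $12\,13$ cannot be merged with the preceding block, since the appended minimum $5$ destroys the layered property. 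When the tail is its own block, its first point is an ascent top \emph{not} in the same block as its predecessor, and all $k$ descent bottoms of $\pi$ lie to its left, so your rule calls for inserting a $D$ after the $(k+1)^{\text{st}}$ $E$ --- which does not exist. Your remark that the final block ``causes no ambiguity since no block follows it'' looks at the wrong end of that block: the problem is its first ascent top, not its last.

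The paper repairs exactly this: it sets $\ell=\tl(\pi)$, applies the insertion rule only to the ascent tops of $\pi_1\cdots\pi_{2k+1-\ell}$ (the permutation with the tail removed), and then appends $\ell$ copies of $D$ at the end of $\Lambda$; equivalently, when the tail is its own block it is fused with the preceding block before encoding. With that modification the rest of your argument (the prefix condition from the CHC, the resolution of $UDED$ via each non-final block ending in a descent, and reversibility) goes through as in Theorem \ref{stair-svee}. Your proposed alternative --- replacing each layered block by the svee block with the same ascent-top/descent-bottom sequence to land in $\mathcal{U}_{2k+1}(312,2431)$ --- would face the same obstruction, since the two classes do not have matching block boundaries at the tail.
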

\begin{proof}

Consider the plot of some $\pi \in \mathcal{U}_{2k+1}(312, 3421)$ and some consecutive subsequence of $\pi$, called $\rho$. Since $\rho$ avoids 312, we can write $\rho=\lambda \rho_m \mu$, where $\rho_m$ is the smallest entry in $\rho$ and $\lambda$ is below $\mu$. Since $\rho$ avoids 3421 and $\rho_{m}$ lies below and to the right of $\lambda$, we have that $\lambda$ avoids both 312 and 231 and thus is layered. 

We first perform the above decomposition on $\pi$. Once we find layered $\lambda$, we can repeat this same decomposition process on $\mu$ instead of $\pi$ and continue to repeat until the subsequences are empty. The result is the decomposition $\pi =( \lambda^{(1)}\ominus1)\oplus (\lambda^{(2)}\ominus 1)\oplus \cdots \oplus(\lambda^{(l-1)}\ominus1)\oplus(\lambda^{(\ell)})$, where each $\lambda^{(i)}$ is layered. We call a permutation of this form $\textit{stair-layered}$, and again we call each $\lambda^{(i)}\ominus1$ (or, for the last $i$, $\lambda^{(\ell)}$) a $\textit{block}$ of $\pi$.

\begin{figure}[H]
\centering
\begin{tikzpicture}[scale=.25]
\draw[gray] (0,0) rectangle (4,4);
\draw[gray] (0,1) rectangle (3,4);
\draw[gray] (0,1) rectangle (1,2);
\draw[gray] (1,2) rectangle (2,3);
\draw[gray] (2,3) rectangle (3,4);
\draw[thick] (0.2, 1.8)--(0.8, 1.2);
\draw[thick] (1.2, 2.8)--(1.8, 2.2);
\draw[thick] (2.2, 3.8)--(2.8, 3.2);
\draw[fill] (3.5, 0.5) circle [radius=0.1];
\draw [gray](4,4) rectangle (8,8);

\draw[gray] (4,5) rectangle (7,8);
\draw[gray] (4,5) rectangle (5,6);
\draw[gray] (5,6) rectangle (6,7);
\draw[gray] (6,7) rectangle (7,8);
\draw[thick] (4.2, 5.8)--(4.8, 5.2);
\draw[thick] (5.2, 6.8)--(5.8, 6.2);
\draw[thick] (6.2, 7.8)--(6.8, 7.2);

\draw[fill] (7.5, 4.5) circle [radius=0.1];
\draw[fill] (8.4,8.4) circle [radius=0.025];
\draw[fill] (8.8, 8.8) circle [radius=0.025];
\draw[fill] (9.2,9.2) circle [radius=0.025];
\draw[gray] (9.6,9.6) rectangle (12.6, 12.6);
\draw[gray] (9.6, 9.6) rectangle (10.6, 10.6);
\draw[gray] (10.6, 10.6) rectangle (11.6, 11.6);
\draw[gray] (11.6, 11.6) rectangle (12.6, 12.6);
\draw[thick]  (9.8, 10.4)--(10.4, 9.8);
\draw[thick]  (10.8, 11.4)--(11.4, 10.8);
\draw[thick]  (11.8, 12.4)--(12.4, 11.8);

\end{tikzpicture}
\caption{The structure of a stair-layered permutation.}
\end{figure}
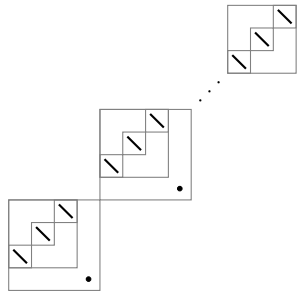

We define a map from $\mathcal{U}_{2k+1}(312, 3421)$ to $\mathcal M_k^{\bf S}$ using similar rules as we did for the previous class. Let $\Lambda$ be the path $EUEU\cdots EU$ containing an alternating $k$ $E$'s and $k$ $U$'s. Consider some $\pi\in\mathcal{U}_{2k+1}(312,3421)$, with $\tl(\pi)=\ell$. Let $a_1, \ldots, a_m$ be the ascent tops of $\pi'=\pi_1\cdots\pi_{2k+1-\ell}$ (which is the permutation obtained from removing the tail from $\pi$), ordered from left to right, and let $n_i$ be the number of descent bottoms to the left of $a_i$. For $1\leq i \leq m$, if $a_i$ is in the same block as the point immediately to the left of it, insert a $D$ immediately after the $n_i^\text{th}$ $U$ in $\Lambda$. If $a_i$ is not in the same block as the point immediately to the left of it, insert a $D$ immediately after the $n_{i}+1^\text{th}$ $E$ in $\Lambda$. Then insert $\ell$ $D$'s at the end of $\Lambda$ to account for the tail. Note that the tail needs to be dealt with in this way because, in this case, the tail can be its own block (in the previous proof, the tail cannot be its own block because we assume blocks to be maximal, and adding a tail to a svee-skew-sum-1 block always creates a svee block). This process is equivalent to, if the tail is its own block, treating the last two blocks as a single block, and then using the rules from the previous proof. Thus, by the same logic in the previous proof, only now with layered permutations instead of svee permutations, this is indeed a bijection between the desired sets, and $|\mathcal{U}_{2k+1}(312, 3421)|=| \mathcal M_k^{\bf S}|=\frac{1}{2k+1}{3k\choose k}$.\end{proof}

\begin{figure}[H]\label{stair-layered bij}
\[\begin{array}{l}
\begin{tikzpicture}[scale=.35]
\draw[fill](1,3) circle [radius=0.2];
\draw[fill] (2,2) circle [radius=0.2];
\draw[fill] (3,4) circle [radius=0.2];
\draw[fill] (4,1) circle [radius=0.2];
\draw[fill] (5,8) circle [radius=0.2];
\draw[fill] (6,7) circle [radius=0.2];
\draw[fill] (7,6) circle [radius=0.2];
\draw[fill] (8,9) circle [radius=0.2];
\draw[fill] (9,11) circle [radius=0.2];
\draw[fill] (10,10) circle [radius=0.2];
\draw[fill] (11,5) circle [radius=0.2];
\draw[fill] (12,12) circle [radius=0.2];
\draw[fill] (13,13) circle [radius=0.2];
\draw [thick] (1,3)--(1,4)--(3,4)--(3,8)--(5,8)--(5,11)--(9,11)--(9,13)--(13,13);
\draw[thick] (6,7)--(6,9)--(8,9);
\draw[thick] (10,10)--(10,12)--(12,12);
\draw[dashed][thick][gray] (0.5,0.5) rectangle (4.5, 4.5);
\draw[dashed][thick][gray] (4.5,4.5) rectangle (11.5, 11.5);
\draw[dashed][thick][red] (11.5,11.5) rectangle (13.5, 13.5);
\end{tikzpicture}\end{array}
\begin{array}{l}
\begin{tikzpicture}[scale=.7]
\draw [thick] (0,0)--(1,0);
\draw [thick] (0.2, -0.2)--(0,0)--(0.2,0.2);
\draw [thick] (0.8, -0.2)--(1,0)--(0.8, 0.2);
\draw [white] (1.1,0)--(1.2,0);
\end{tikzpicture}
\end{array}\begin{array}{l} \begin{tikzpicture}[scale=.45]
\draw[help lines] (0,0) grid (18,2);
\draw [ultra thick][teal] (0,0) -- (1,0)--(2,1)--(3,0)--(4,0)--(5,1)--(6,1)--(7,0)--(8,1)--(9,1)--(10,2)--(12,0)--(13,0)--(14,1)--(15,1)--(16,2)--(18,0);
\end{tikzpicture}\end{array}\] 
\caption{The stair-layered permutation 3\,2\,4\,1\,8\,7\,6\,9\,11\,10\,5\,12\,13 and its image under the bijection in Theorem \ref{stair-layered}, the $\bf{S}$-Motzkin path $\mathit{EUDEUEDUEUDDEUEUDD}$. The rightmost block is the tail.}
\end{figure}
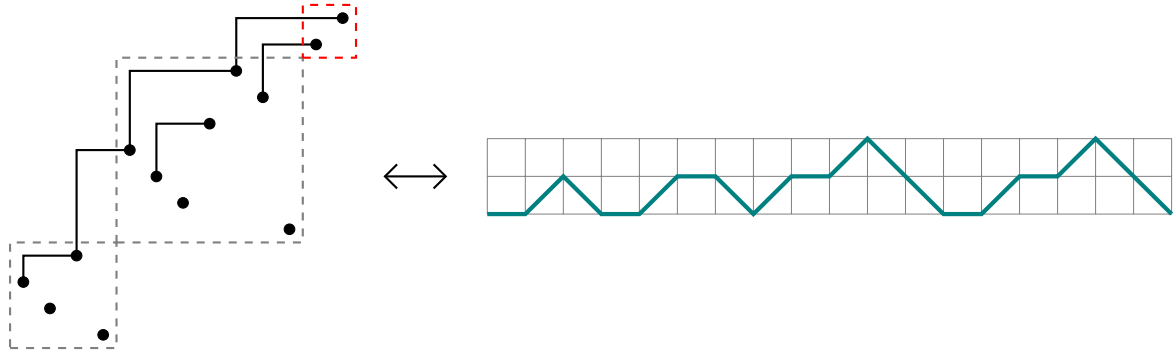

\begin{Th}\label{svee-increasing}
We have $|\mathcal{U}_{2k+1}(312, 1432)|=\frac{1}{2k+1}{3k\choose k}$.
\end{Th}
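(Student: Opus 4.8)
The plan is to mirror the proofs of Theorems \ref{stair-svee} and \ref{stair-layered}: first read off a block decomposition from the two avoided patterns, then encode that decomposition as an $\mathbf{S}$-Motzkin path. As before, I would take a consecutive subsequence $\rho$ of $\pi\in\mathcal{U}_{2k+1}(312,1432)$ and, using $312$-avoidance, write $\rho=\lambda\rho_m\mu$ with $\rho_m$ the smallest entry and $\lambda$ below $\mu$. The new feature is where $1432$-avoidance bites: since $\rho_m$ lies below and to the left of every point of $\mu$, any $321$ inside $\mu$ would complete a pattern $\rho_m\cdot(321)=1432$. Hence $\mu$ avoids both $312$ and $321$, and these are exactly the permutations that are direct sums of the ``increasing-run-then-drop'' svees $23\cdots t\,1$. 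Iterating the decomposition, I expect $\pi$ to be a direct sum $B_1\oplus\cdots\oplus B_m$ of sum-indecomposable svee blocks stacked increasingly (this is the svee-increasing shape), where $1432$-avoidance forces every block after the first to avoid $321$, while the first block may contain a descending run.

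The uniquely sorted hypothesis must do essential work beyond pattern avoidance, and I expect the structural step to be the main obstacle. A priori $\mathrm{Av}(312,1432)$ contains indecomposable blocks that are not svees, such as $2431$; but in $2431$ the peak ($4$) is an ascent top with no descent top below and to its left, so by Lemma \ref{partition} it cannot be a NE endpoint, and $\pi$ would have no CHC. Systematically ruling out every non-svee indecomposable block in this way, and dually pinning down exactly which svees may serve as $B_1$ (a block with an internal peak needs an internal descent top below-left of it, since nothing precedes $B_1$, whereas the peaks of later blocks may connect to descent tops in earlier blocks), is the crux of establishing the decomposition. Once it is in place, each block is a svee and is therefore governed by the Dyck-path correspondence of Lemma 3.2.

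With the decomposition established, I would construct the bijection to $\mathcal{M}_k^{\mathbf{S}}$ exactly as in Theorem \ref{stair-layered}. Starting from the skeleton $\Lambda=EUEU\cdots EU$ with $k$ copies each of $E$ and $U$, I delete the tail of $\pi$ of length $\ell$, obtaining $\pi'$, and for each ascent top $a_i$ of $\pi'$ (read left to right) I insert a $D$ after the $n_i^{\text{th}}$ $U$ when $a_i$ lies in the same block as the point immediately to its left, and after the $(n_i+1)^{\text{th}}$ $E$ otherwise, where $n_i$ is the number of descent bottoms left of $a_i$; finally I append $\ell$ copies of $D$ to record the tail, exactly as the tail had to be handled separately in Theorem \ref{stair-layered}. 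It then remains to check that this is a well-defined bijection: having exactly $k$ descents yields exactly $k$ inserted $D$'s; the existence of a CHC translates, via the same ``NE endpoint after SW endpoint'' argument used throughout Section 3, into the requirement that the $i^{\text{th}}$ $D$ follow at least $i$ $E$'s and $i$ $U$'s, which is precisely the condition that the path stay weakly above the horizontal axis; and the potential $UDED$ ambiguity about which kind of ascent produced a given $D$ is resolved by the block structure, since a non-first block ends in a descent into the next block's minimum, so a block-internal ascent top cannot be immediately followed by a block-initial one. This yields $|\mathcal{U}_{2k+1}(312,1432)|=|\mathcal{M}_k^{\mathbf{S}}|=\frac{1}{2k+1}\binom{3k}{k}$.
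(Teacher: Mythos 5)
Your overall strategy --- a structural decomposition followed by an encoding into $\mathbf{S}$-Motzkin paths --- is the same as the paper's, but the structural claim at its center is false, and the encoding you build on it is not injective. The permutation $4\,3\,2\,5\,7\,6\,1\,9\,10\,8\,11$ lies in $\mathcal{U}_{11}(312,1432)$ (it is in fact the paper's illustrating example for this theorem), and its unique decomposition into sum-indecomposable components is $4325761\oplus 231\oplus 1$; the first component contains the pattern $132$ (take the subsequence $5,7,6$), so it is not a svee, and no program of ``ruling out every non-svee indecomposable block'' can succeed. The error enters at the iteration step: your single-step decomposition $\rho=\lambda\rho_m\mu$ with $\mu\in\mathrm{Av}(312,321)$ is correct, but recursing on $\lambda$ produces a shape in which the descending run of small values is interleaved \emph{positionally} with the ascending blocks $\Inc(n)\ominus 1$, all inside a single sum-component; this is the paper's ``svee-increasing'' shape, and it is not a direct sum of svees. (The paper reaches the same shape by splitting $\rho$ at its last entry rather than at its minimum.)

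The bijection then fails concretely. Taking your blocks to be the sum-components $B_i$ as you define them, the permutations $4\,3\,2\,5\,7\,6\,1\,9\,10\,8\,11$ and $4\,3\,2\,6\,7\,5\,1\,9\,10\,8\,11$ both lie in $\mathcal{U}_{11}(312,1432)$; in both, after deleting the tail $11$, the ascent tops sit at positions $4,5,8,9$ with $n_1=n_2=2$ and $n_3=n_4=4$, and the same-block/new-block classification is identical (same, same, new, same), so both map to $EUEUDDEUEUDEDUD$. The insertion scheme cannot tell whether two consecutive ascent tops inside one sum-component form two separate ascending blocks ($5$, then $7,6$) or the increasing run of a single block ($6,7$, then the drop $5$). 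The paper's proof uses a genuinely different encoding that sidesteps this: each point of the descending arm is labelled $EU$, each ascending block of size $n\ge 2$ is bracketed as $ED^{n-1}U$ and each singleton as $D$, and the labels are read off left to right; those $E\cdots U$ delimiters around each ascending block are exactly the data your scheme discards, and they are what make the map invertible.
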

\begin{proof}

Consider $\pi \in \mathcal{U}_{2k+1}(312, 1432)$ and some consecutive subsequence of $\pi$, called $\rho$. Since $\pi$ avoids 312, we can write $\rho =\lambda\mu\rho_m$, where $\lambda$ is below $\rho_m$ and $\mu$ is above $\rho_m$. Now we distinguish two cases.\\

\noindent\underline{Case 1:} The subsequence $\lambda$ is nonempty. Then an entry of $\lambda$ lies below and to the left of $\mu$ while $\rho_m$ lies above and to the right of $\lambda$ and below and to the right of $\mu$, so in order for $\rho$ to avoid 1432, we have that $\mu$ must avoid 21 and thus is increasing.\\

\noindent\underline{Case 2:} The subsequence $\lambda$ is empty. Then there are no clear restrictions on $\mu$.\\

We first perform the above decomposition on $\pi$. In case 1, we can repeat this decomposition process on $\lambda$; in case 2, we can repeat it on $\mu$. We repeat until the unknown subsequences are empty. The result is the decomposition of $\pi$ into a permutation that looks like a svee permutation, except each point above the horizontal can be replaced with a block of the form $\Inc(n)\ominus 1$ (points that are not replaced with blocks of this form are regarded as blocks of size one). We call a permutation of this form \textit{svee-increasing}.

\begin{figure}[H]
\centering
\begin{tikzpicture}[scale=.3]

\draw[fill] (0,0) circle [radius=0.15];
\draw[fill] (1,-1) circle [radius=0.15];
\draw[fill] (6,-2) circle [radius=0.15];
\draw[fill] (12,-3) circle [radius=0.15];

\draw[gray] (2,1) rectangle (5,4);
\draw [thick] (2.4, 1.9)--(4.2, 3.7);
\draw[fill] (4.5, 1.5) circle [radius=0.15];

\draw[gray] (8,4) rectangle (11,7);
\draw [thick] (8.4, 4.9)--(10.2, 6.7);
\draw[fill] (10.5, 4.5) circle [radius=0.15];

\filldraw[black] (7.5,0) node[anchor=south] {...};

\draw[fill] (13,8) circle [radius=0.15];

\end{tikzpicture}
\caption{The general structure of a svee-increasing permutation. Each of the blocks above the horizontal and the points below the horizontal can be deleted.}
\end{figure}
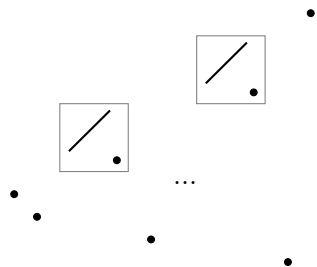

We define a map from $\mathcal{U}_{2k+1}(312, 1432)$ to $\mathcal M_k^{\bf S}$. Given the plot of some $\pi \in \mathcal{U}_{2k+1}(312, 1432)$, we label each point below the horizontal with the symbols $EU$. Above the horizontal, we label each point in its own block $D$, and we label a block of size $n\geq 2$ with $ED^{n-1}U$, where $D^{n-1}$ denotes the concatenation of $n-1$ $D$'s. The first point in $\pi$ is left unlabeled. Then we read the labels from left to right to obtain the path $\Lambda$.

Note that in this labeling, each ascent top can be associated with a $D$ and each descent bottom with an $EU$ (we can think of the $EU$ in $ED^{n-1}U$ being associated with the last point in the block, a descent bottom, and the $D^{n-1}$ being associated with the $n-1$ ascent tops in the block). Thus, since $\pi$ is uniquely sorted, there are $k$ of each type of step. Moreover, by construction, $E$ and $U$ steps always alternate, starting with $E$. Since $\pi$ has a CHC, each prefix of $\pi$ contains at least as many descents as ascents, so each prefix of $\Lambda$, when read right to left, contains at least as many $U$'s as $D$'s and thus remains above the horizontal. Therefore, $\Lambda \in \mathcal M_k^{\bf S}$. 

For the inverse map, Let $\Lambda \in \mathcal M_k^{\bf S}$. Since $\Lambda$ starts with $E$ and between every two east steps there is exactly one up step, we can uniquely factor $\Lambda = B_1 \cdots B_\ell$ where each $B_i$ is either $D$ or of the form $ED^mU$ for some $m \geq 0$. To recover $\pi \in \mathcal{U}_{2k+1}(312, 1432)$, we start with a permutation plot with a single point. Then we read $\Lambda$ left to right and for each $EU$ factor we plot a point below and to the right of the plot so far, for each $D$ factor we plot a point above and to the right of the plot so far, and for each $ED^mU$ factor ($m \geq 1)$ we plot a block of the form Inc$(m) \ominus 1$ above and to the right of the plot so far. The resulting $\pi$ is svee-increasing and thus avoids 321 and 1432. By the same logic as in the paragraph above, in this process $D$ steps correspond to ascent tops and $EU$ pairs correspond to descent bottoms, so the condition that the $i^\text{th}$ down step in $\Lambda$ occurs after at least $i$ east steps and $i$ up steps implies that $\pi$ is uniquely sorted. Thus, $\pi \in \mathcal{U}_{2k+1}(312, 1432)$.
Therefore, this is indeed a bijection, so $|\mathcal{U}_{2k+1}(312, 1432)|=| \mathcal M_k^{\bf S}|=\frac{1}{2k+1}{3k\choose k}$.\end{proof}

\begin{figure}[H]\label{svee-increasing bij}
\[\begin{array}{l}
\begin{tikzpicture}[scale=.35]
\draw[fill](1,4) circle [radius=0.2];
\draw[fill] (2,3) circle [radius=0.2];
\draw[fill] (3,2) circle [radius=0.2];
\draw[fill] (4,5) circle [radius=0.2];
\draw[fill] (5,7) circle [radius=0.2];
\draw[fill] (6,6) circle [radius=0.2];
\draw[fill] (7,1) circle [radius=0.2];
\draw[fill] (8,9) circle [radius=0.2];
\draw[fill] (9,10) circle [radius=0.2];
\draw[fill] (10,8) circle [radius=0.2];
\draw[fill] (11,11) circle [radius=0.2];
\draw [thick] (1,4)--(1,7)--(5,7)--(5,10)--(9,10)--(9,11)--(11,11);
\draw[thick] (2,3)--(2,5)--(4,5);
\draw[thick] (6,6)--(6,9)--(8,9);
\draw[dashed][thick][gray] (4.5,5.5) rectangle (6.5, 7.5);
\draw[dashed][thick][gray] (7.5,7.5) rectangle (10.5, 10.5);
\node[below] at (2,3) {EU};
\node[below] at (3,2) {EU};
\node[below] at (7,1) {EU};
\node[below] at (4,5) {D};
\node[below] at (12.5,9.6) {EDDU};
\node[right] at (11,11) {D};
\node[below] at (7.9,7) {EDU};
\end{tikzpicture}\end{array}
\begin{array}{l}
\begin{tikzpicture}[scale=.7]
\draw [thick] (0,0)--(1,0);
\draw [thick] (0.2, -0.2)--(0,0)--(0.2,0.2);
\draw [thick] (0.8, -0.2)--(1,0)--(0.8, 0.2);
\draw [white] (1.1,0)--(1.2,0);
\end{tikzpicture}
\end{array}\begin{array}{l} \begin{tikzpicture}[scale=.45]
\draw[help lines] (0,0) grid (15,2);
\draw [ultra thick][teal] (0,0) -- (1,0)--(2,1)--(3,1)--(4,2)--(5,1)--(6,1)--(7,0)--(8,1)--(9,1)--(10,2)--(11,2)--(13,0)--(14,1)--(15,0);
\end{tikzpicture}\end{array}\] 
\caption{The svee-increasing permutation 4\,3\,2\,5\,7\,6\,1\,9\,10\,8\,11 and its image under the bijection in Theorem \ref{svee-increasing}, the $\bf{S}$-Motzkin path $\mathit{EUEUDEDUEUEDDUD}$.}
\end{figure}

\begin{Th}\label{vee-layered}
We have $|\mathcal{U}_{2k+1}(231, 1423)|=\frac{1}{2k+1}{3k\choose k}$.
\end{Th}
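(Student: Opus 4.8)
The plan is to mirror the proofs of Theorems~\ref{stair-svee} and \ref{stair-layered} almost verbatim, since $\mathcal{U}_{2k+1}(231,1423)$ should decompose into a ``stair-vee'' shape built from vee permutations. First I would take $\pi\in\mathcal{U}_{2k+1}(231,1423)$ and a consecutive subsequence $\rho$. Because $\rho$ avoids $231$, the standard decomposition writes $\rho=\lambda\,\rho_m\,\mu$ where $\rho_m$ is the \emph{largest} entry and $\lambda$ lies below $\mu$ (the $231$-avoidance analog of the $312$-avoidance splits used above). I then expect that avoidance of $1423$, together with the fact that $\rho_m$ sits above and to the right of $\lambda$, forces $\mu$ to avoid $312$ (or the appropriate length-three pattern), so that $\mu$ avoids both $231$ and $312$ and is therefore layered --- or, dually, that $\lambda$ is forced to avoid $132$ and hence is vee. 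The exact bookkeeping of which piece becomes vee and which stays unrestricted is the first thing I would pin down carefully by drawing the plot and checking the relative positions against the pattern $1423$.

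Once the local structure is identified, I would iterate the decomposition on the unrestricted piece (as in the earlier proofs, repeating on $\lambda$ or $\mu$ until the remaining subsequences are empty) to obtain a global ``stair-vee'' decomposition: a sum of blocks, each of the form $(\text{vee})\ominus 1$, with a final block that is a plain vee. I would then state this as the defining structural description of the class, giving a figure analogous to Figure~8/Figure~10.

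Next I would build the bijection to $\mathcal{M}_k^{\mathbf S}$ using the same template as in Theorem~\ref{stair-svee}: start from the alternating path $EUEU\cdots EU$ with $k$ copies of $E$ and $k$ of $U$, list the ascent tops $a_1,\dots,a_k$ left to right, let $n_i$ count the descent bottoms to the left of $a_i$, and insert a $D$ after the $n_i^{\text{th}}$ $U$ when $a_i$ shares a block with its left neighbor, or after the $(n_i+1)^{\text{th}}$ $E$ otherwise. The uniquely-sorted condition again gives exactly $k$ ascent tops and $k$ descent bottoms, and the existence of a CHC translates (via Lemma~\ref{partition} and the prefix argument used throughout Section~3) into the $\mathbf S$-Motzkin height condition that the $i^{\text{th}}$ $D$ follows at least $i$ $E$'s and $i$ $U$'s. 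I would conclude $|\mathcal{U}_{2k+1}(231,1423)|=|\mathcal{M}_k^{\mathbf S}|=\frac{1}{2k+1}\binom{3k}{k}$.

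The main obstacle will be the unambiguity argument for reversing the map, precisely the $UDED$ ambiguity resolved near the end of the proof of Theorem~\ref{stair-svee}. I must verify that the vee-based block structure (rather than the svee-based one) still forbids an in-block ascent top from being immediately followed by a new-block ascent top --- i.e.\ that each block still \emph{ends in a descent} in the stair-vee setting --- so that the factor $UDED$ cannot arise from two genuinely different configurations. I would also have to double-check the boundary behavior of the tail: whether, as in Theorem~\ref{stair-layered}, the tail can constitute its own block and hence requires the separate ``append $\ell$ trailing $D$'s'' treatment, or whether, as in Theorem~\ref{stair-svee}, maximality of blocks absorbs it automatically. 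Settling which of these two regimes applies to vee blocks is the one genuinely new point; everything else transfers directly from the preceding two theorems.
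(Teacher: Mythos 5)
Your structural claim is where the proof breaks. You model the argument on Theorems~\ref{stair-svee} and~\ref{stair-layered} and predict a ``stair-vee'' decomposition, $\pi=(\text{vee}\ominus 1)\oplus\cdots\oplus(\text{vee})$. But any block of the form $\tau\ominus 1$ with $\tau$ a vee having nonempty arms already \emph{contains} $231$ (take a left-arm point of $\tau$, a right-arm point above it, then the appended low point), so stair-vee permutations do not even lie in $\operatorname{Av}(231)$. The stair shape is an artifact of $312$-avoidance, where the minimum splits the permutation into a lower-left piece and an upper-right piece and one iterates on the upper-right piece; it does not transfer to $231$-avoidance. Moreover, with your chosen split $\rho=\lambda\rho_m\mu$ at the \emph{maximum}, what $1423$-avoidance actually forces (when $\lambda\neq\emptyset$) is that $\mu$ contains no ascent, i.e.\ $\mu$ is decreasing --- the ``$1$'' comes from $\lambda$, the ``$4$'' is $\rho_m$, and the ``$23$'' would be an ascent in $\mu$ --- not that $\mu$ avoids $312$ and is layered. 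So both the local forcing and the global shape you propose are incorrect, and the insertion-style bijection into $EUEU\cdots EU$ that you plan to reuse has no block structure to hang on.

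The decomposition that works anchors at the \emph{first} entry: since $\rho$ avoids $231$, write $\rho=\rho_1\lambda\mu$ with $\lambda$ below $\rho_1$ and $\mu$ above $\rho_1$; since $\rho_1$ lies below and to the left of $\mu$, avoidance of $1423$ forces $\mu$ to avoid $312$, hence $\mu$ is layered; then iterate on $\lambda$. This produces a \emph{vee-layered} permutation --- a single vee whose right-arm points may be replaced by decreasing layers --- not a direct sum of blocks. The bijection to $\mathcal M_k^{\bf S}$ is then of the labeling type used for Theorem~\ref{svee-increasing}: label each left-arm point $D$, label each layer of size $n$ with $ED^{n-1}U$, leave the lowest point unlabeled, and read from top to bottom; Lemma~\ref{partition} and the usual prefix argument give the height condition. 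Your worries about the $UDED$ ambiguity and the tail are moot in this regime, since the $E$/$U$ alternation is built into the labels rather than recovered from insertion positions.
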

\begin{proof}

Consider $\pi \in \mathcal{U}_{2k+1}(231, 1423)$ and some consecutive subsequence of $\pi$, called $\rho$. Since $\rho$ avoids 231, we can write $\rho = \rho_1\lambda \mu$, where $\lambda$ lies below $\rho_1$ and $\mu$ lies above $\rho_1$. Since $\rho$ avoids 1423 and $\rho_1$ is below and to the left of  $\mu$, we have that  $\mu$ must also avoid 312 in addition to 231, and thus $\mu$ is layered. 

We first perform the above decomposition on $\pi$. Once we find layered $\mu$, we can repeat this same decomposition process on $\lambda$ instead of $\pi$ and continue to repeat until the unknown subsequence is empty. Since at each step we add a point to the the left and above the unknown part of the permutation, as well as a layered permutation to the right of that point and the unknown part and above the unknown part, the resulting permutation is vee shaped, except each point to the right of the vertical of the vee can be replaced with a decreasing block. We call a permutation of this form \textit{vee-layered}.

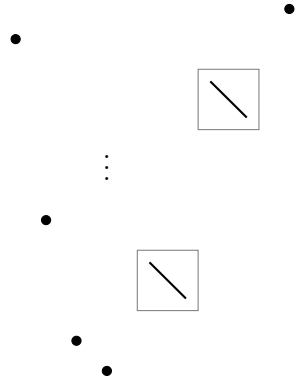
\begin{figure}[H]
\centering
\begin{tikzpicture}[scale=.4]

\draw[fill] (0,0) circle [radius=0.15];
\draw[fill] (-1,1) circle [radius=0.15];
\draw[fill] (-2,5) circle [radius=0.15];
\draw[fill] (-3,11) circle [radius=0.15];
\draw[fill] (6,12) circle [radius=0.15];

\draw[gray] (1,2) rectangle (3,4);
\draw [thick] (1.4, 3.6)--(2.6, 2.4);

\filldraw[black] (0,6) node[anchor=south] {\vdots};

\draw[gray] (3,8) rectangle (5,10);
\draw [thick] (3.4, 9.6)--(4.6, 8.4);

\end{tikzpicture}
\caption{The general structure of a vee-layered permutation. Each of the blocks to the right of the vertical and the points to the left of the vertical can be deleted.}
\end{figure}

We define a map from $\mathcal{U}_{2k+1}(231, 1423)$ to $\mathcal M_k^{\bf S}$ using rules similar to those in the previous bijection: Given the plot of $\pi \in \mathcal{U}_{2k+1}(231, 1423)$, we label each point to the left of the vertical with $D$. To the right of the vertical, we label each layer of size $n$ with $ED^{n-1}U$. The lowest point in $\pi$ is left unlabeled. Then we read the labels from top to bottom to obtain the path $\Lambda$. By the same logic as in the previous proof, except where now the separation of $E$ and $U$ with $D$'s marks a layer with one ascent top and $n-1$ descent bottoms, we have that $\Lambda$ is an $\bf{S}$-Motzkin path. For the inverse map, we can again factor $\Lambda = B_1 \cdots B_\ell$ where each $B_i$ is of the form $ED^mU$ ($m \geq 0$) or $D$. Then, we start with the plot of a single point and in the reverse order $B_\ell, \ldots, B_1$, we plot a point above and to the left of the plot so far for each $D$ factor, and plot a decreasing block of size $m+1$ above and to the right of the plot so far for each $ED^mU$ factor. By the same logic as before, the resulting $\pi$ is in $ \mathcal{U}_{2k+1}(231, 1423)$. Thus, this is indeed a bijection, so $|\mathcal{U}_{2k+1}(231, 1423)|=| \mathcal M_k^{\bf S}|=\frac{1}{2k+1}{3k\choose k}$.
\end{proof}

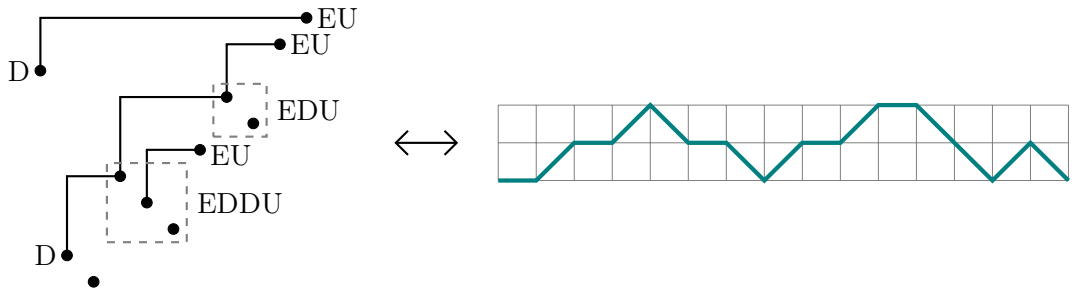
\begin{figure}[H]\label{vee-layered bij}
\[\begin{array}{l}
\begin{tikzpicture}[scale=.35]
\draw[fill](1,9) circle [radius=0.2];
\draw[fill] (2,2) circle [radius=0.2];
\draw[fill] (3,1) circle [radius=0.2];
\draw[fill] (4,5) circle [radius=0.2];
\draw[fill] (5,4) circle [radius=0.2];
\draw[fill] (6,3) circle [radius=0.2];
\draw[fill] (7,6) circle [radius=0.2];
\draw[fill] (8,8) circle [radius=0.2];
\draw[fill] (9,7) circle [radius=0.2];
\draw[fill] (10,10) circle [radius=0.2];
\draw[fill] (11,11) circle [radius=0.2];
\draw [thick] (1,9)--(1,11)--(11,11);
\draw[thick] (2,2)--(2,5)--(4,5)--(4,8)--(8,8)--(8,10)--(10,10);
\draw[thick] (5,4)--(5,6)--(7,6);
\draw[dashed][thick][gray] (3.5,2.5) rectangle (6.5, 5.5);
\draw[dashed][thick][gray] (7.5,6.5) rectangle (9.5, 8.5);
\node[left] at (1,9) {D};
\node[left] at (2,2) {D};
\node[right] at (11,11) {EU};
\node[right] at (10,10) {EU};
\node[right] at (7,5.8) {EU};
\node[right] at (9.5,7.5) {EDU};
\node[right] at (6.5,4) {EDDU};
\end{tikzpicture}\end{array}
\begin{array}{l}
\begin{tikzpicture}[scale=.7]
\draw [thick] (0,0)--(1,0);
\draw [thick] (0.2, -0.2)--(0,0)--(0.2,0.2);
\draw [thick] (0.8, -0.2)--(1,0)--(0.8, 0.2);
\draw [white] (1.1,0)--(1.2,0);
\end{tikzpicture}
\end{array}\begin{array}{l} \begin{tikzpicture}[scale=.45]
\draw[help lines] (0,0) grid (15,2);
\draw [ultra thick][teal] (0,0) -- (1,0)--(2,1)--(3,1)--(4,2)--(5,1)--(6,1)--(7,0)--(8,1)--(9,1)--(10,2)--(11,2)--(13,0)--(14,1)--(15,0);
\end{tikzpicture}\end{array}\] 
\caption{The vee-layered permutation 9\,2\,1\,5\,4\,3\,6\,8\,7\,10\,11 and its image under the bijection in Theorem \ref{vee-layered}, the $\bf{S}$-Motzkin path $\mathit{EUEUDEDUEUEDDUD}$.}
\end{figure}

\begin{Th}
We have $|\mathcal{U}_{2k+1}(132, 3412)|=\frac{1}{2k+1}{3k\choose k}$.
\end{Th}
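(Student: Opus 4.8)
The plan is to follow the template established in Theorems \ref{stair-svee}--\ref{vee-layered}: first extract the structure of the permutations in the class from the two pattern-avoidance conditions, and then transport the two defining conditions of unique sortedness (having exactly $k$ descents and possessing a CHC) onto the two defining conditions of an $\mathbf S$-Motzkin path. First I would take $\pi \in \mathcal U_{2k+1}(132, 3412)$ and a consecutive subsequence $\rho$. Since $\rho$ avoids $132$, I can write $\rho = \lambda\, \rho_m\, \mu$ where $\rho_m$ is the largest entry and $\lambda$ lies above $\mu$. The key local observation coming from the $3412$-condition is this: if $\lambda$ is nonempty and $\mu$ contains the pattern $12$, then any single point of $\lambda$ (playing the role of the ``$3$''), together with $\rho_m$ (the ``$4$'') and an ascent of $\mu$ (the ``$12$''), forms a $3412$ pattern. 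Hence in every such $\rho$ we must have either $\lambda$ empty or $\mu$ decreasing, and conversely these two alternatives suffice to rule out every cross-pattern (when $\mu$ is decreasing no ascent outside $\lambda$ survives, and when $\lambda$ is empty $\rho_m$ cannot serve as any letter of a $3412$).

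Next I would iterate this decomposition. Because the constraint splits into the case $\lambda = \emptyset$ (giving $\rho = 1 \ominus \mu$) and the case $\lambda \neq \emptyset$ (giving $\rho = (\lambda \oplus 1)\ominus \Dec(j)$ for some $j\ge 0$), repeated application produces a global shape consisting of an increasing chain of ``peaks'' (the successive maxima) with decreasing blocks hanging below and between them; this is a common generalization of both the svee and the vee shapes (for $k = 2$ the three permutations in the class are $32145$, $42135$, and $32415$, which are respectively svee, vee, and svee). From Lemma \ref{partition} together with unique sortedness, every non-initial point of $\pi$ is either an ascent top (equivalently, an NE endpoint of the CHC) or a descent bottom, so these peaks and decreasing blocks account for all of the points.

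I would then build the bijection $\mathcal U_{2k+1}(132, 3412) \to \mathcal M_k^{\mathbf S}$ as in Theorems \ref{stair-svee} and \ref{svee-increasing}: start from the alternating scaffold $EUEU\cdots EU$ with $k$ $E$'s and $k$ $U$'s, associate each descent bottom with an $EU$ of the scaffold and each ascent top with a $D$, and insert the $D$ coming from an ascent top after the appropriate $U$ when that ascent top continues its current block and after the appropriate $E$ when it begins a new block. Unique sortedness gives exactly $k$ ascent tops and $k$ descent bottoms, so the image has $k$ of each step; the scaffold forces condition (1) (first step $E$) and condition (2) ($E$ and $U$ alternate); and, just as in the proof of Lemma 3.2, the CHC guarantees that the NE endpoint of each hook lies to the right of its SW endpoint, which is precisely the statement that each prefix of $\pi$ has at least as many descents as ascents, hence that the $i$th $D$ occurs after at least $i$ $E$'s and $i$ $U$'s, which is condition (3).

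The hard part will be turning the $D$-placement into a genuinely invertible rule rather than a mere count. As $32145$ and $42135$ show, the bare left-to-right sequence of ascent tops and descent bottoms does not determine $\pi$, so the positions at which the $D$'s are slotted into the scaffold must encode the block structure, and I must show this encoding can be uniquely decoded. This is exactly the obstacle dispatched in Theorem \ref{stair-svee}, where a fragment $UDED$ is a priori ambiguous; I expect to resolve it in the same way, using that each decreasing block terminates in a descent, so an ascent top immediately preceded by a point of its own block cannot be followed by the start of a new block. Checking that the two-case global structure meshes with this decoding---in particular that the inverse map always returns a permutation that is simultaneously $132$- and $3412$-avoiding and uniquely sorted---is where the bulk of the careful work will lie.
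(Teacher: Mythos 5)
Your opening decomposition is correct (if $\lambda\neq\emptyset$ and $\mu$ contains a $12$, a point of $\lambda$, the entry $\rho_m$, and that ascent form a $3412$), and your $k=2$ data check out. But the proposal has a genuine gap, and it is exactly where you locate it: the global structure is not what you describe, and without the correct structure the $D$-placement rule cannot be defined, let alone inverted. The two branches of your recursion do not linearize into ``an increasing chain of successive maxima with decreasing blocks hanging below and between them.'' The branch $\lambda=\emptyset$ recurses on $\mu$ (advancing the starting position), while the branch $\mu=\Dec(j)$ recurses on $\lambda$ (staying at the left end), so iterating produces a genuinely nested, tree-like shape. Concretely, $5\,3\,2\,4\,6\,1\,7$ lies in $\mathcal{U}_7(132,3412)$ (three descents; the CHC has hooks $(1,5)\!-\!(5,6)$, $(2,3)\!-\!(4,4)$, $(5,6)\!-\!(7,7)$; both patterns are avoided), yet the material between its consecutive left-to-right maxima $5$ and $6$ is $3\,2\,4$, which is not a decreasing block --- a vee-shaped fragment hangs below the first peak and straddles the second. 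Consequently the dichotomy ``continues its current block / begins a new block,'' which drives the $D$-placement in Theorems \ref{stair-svee} and \ref{stair-layered}, has no defined meaning for this class: in $5\,3\,2\,4\,6\,1\,7$ the ascent tops $4$ and $6$ both have $n_i=2$, and your recipe does not determine where their two $D$'s go or in what order. So what remains is not a routine verification in the style of the earlier theorems but the actual construction of the bijection for a recursively structured class.

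For comparison, the paper does none of this: it observes that Defant's bijection $\mathrm{swu}\colon\mathcal{U}_{2k+1}(231)\to\mathcal{U}_{2k+1}(132)$ carries $\operatorname{Av}(231,1423)$ onto $\operatorname{Av}(132,3412)$, so the count follows immediately from Theorem \ref{vee-layered}. If you want a self-contained argument, the shortest repair is probably to exhibit such a transfer to one of the already-solved classes (or set up a generating-function recursion matching the two-branch decomposition you found), rather than to force this class into the linear block-scaffold template.
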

\begin{proof}

The result follows from Theorem \ref{vee-layered} and some previous work. Theorem 6.1 in \cite{main} states that there exists a bijection $\text{swu}: \mathcal{U}_{2k+1}(231)\to\mathcal{U}_{2k+1}(132)$. In the proof of Theorem 5.1 in \cite{fertility}, Defant shows that $\text{swu}(\text{Av}(231, 1423)) = 
\text{Av}(132, 3412)$. Consequently, swu yields a bijection between $\mathcal U_{2k+1}(231,1423)$ and $\mathcal U_{2k+1}(132,3412)$. Thus $|\mathcal U_{2k+1}(132,3412)|=|\mathcal U_{2k+1}(231,1423)|=\frac{1}{2k+1}{3k\choose k}$. \end{proof}

\section{A Bijection with a Subclass of Schr\"oder Paths}\label{schroder}

We now introduce another well-studied type of lattice path. A \textit{Schr\"oder path} of semilength $k$ is a path from $(0,0)$ to $(2k, 0)$ that consists of an equal number of up steps and down steps, as well as some number of (2,0) steps (called \textit{horizontal steps} and denoted $H$ in the up-down sequence) and at no point crosses below the horizontal axis. The $k^\text{th}$ \textit{Schr\"oder number}, denoted $\mathscr{S}_k$, is defined to be the number of Schr\"oder paths of semilength $k$.

\begin{figure}[H]
\centering
\begin{tikzpicture}[scale=.5]
\draw[help lines] (0,0) grid (14,4);
\draw [ultra thick][teal] (0,0)--(1,1)--(3,1)--(6,4)--(8,2)--(9,3)--(11,3)--(14,0);
\end{tikzpicture}
\caption{The Schr\"oder path $\mathit{UHUUUDDUHDDD}$ of semilength 7.}
\end{figure}
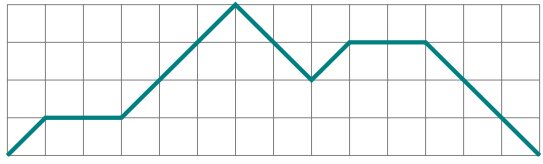

It is well-known that the Schr\"oder paths can be divided into two equinumerous subclasses: paths that have a horizontal step on the horizontal axis and paths that \textit{do not} have a horizontal step on the horizontal axis. Let $\mathcal{S}_k$ be the set of paths in the latter subclass of length $k$. It follows that $|\mathcal{S}_k|=\mathscr{S}_k/2$ for $k\geq 1$ (and $|\mathcal{S}_0|=\mathscr{S}_0=1$); these numbers are known as the \emph{little Schr\"oder numbers}\footnote{Sometimes they are called the \textit{small Schr\"oder numbers}.}.

\begin{Th}\label{sch}
We have $|\mathcal{U}_{2k+1}(231,1432)|=|\mathcal{S}_k|$.
\end{Th}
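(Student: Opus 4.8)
The plan is to follow the template of Theorems \ref{svee-increasing} and \ref{vee-layered}: first extract the global shape of a permutation in $\mathcal{U}_{2k+1}(231,1432)$ by repeatedly decomposing its consecutive subsequences, and then read off a lattice path whose two defining features mirror the two defining features of a uniquely sorted permutation. Concretely, given $\pi\in\mathcal{U}_{2k+1}(231,1432)$ and a consecutive subsequence $\rho$, avoidance of $231$ lets me write $\rho=\rho_1\lambda\mu$ with $\lambda$ below $\rho_1$ and $\mu$ above $\rho_1$ (everything below $\rho_1$ must precede everything above it, else $\rho_1$ with an out-of-order pair forms a $231$). Since $\rho_1$ lies below $\mu$ and $\pi$ avoids $1432$, the subsequence $\mu$ cannot contain $321$ (a $321$ in $\mu$ together with $\rho_1$ would be a $1432$), so $\mu$ avoids both $231$ and $321$. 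Iterating on $\lambda$ peels off a decreasing chain of pivots $p_1>\cdots>p_j=1$ occupying the first $j$ positions, after which the remaining entries assemble, in increasing value ranges, into the $\{231,321\}$-avoiding blocks produced at each stage. The upshot, which I would record as the structural lemma of the section, is that $\pi$ is vee-shaped except that each point to the right of the vertical is inflated into a block avoiding $231$ and $321$.

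Next I would analyze a single such block. Applying the first-entry decomposition inside a block (now using $321$-avoidance in place of $1432$) shows it has the form $\nu_1\alpha_1\nu_2\alpha_2\cdots\nu_r\alpha_r$, where $\nu_1<\cdots<\nu_r$ are ascent tops and each $\alpha_t$ is an increasing run in the value range $(\nu_{t-1},\nu_t)$; in particular a block has no two adjacent descents, and its descents are exactly the drops from a peak $\nu_t$ into the following valley. The bijection $\mathcal{U}_{2k+1}(231,1432)\to\mathcal{S}_k$ then reads the plot by decreasing value, exactly as in Lemma 3.1: each pivot contributes a $D$ and each ascent top contributes a $U$, so that the skeleton obtained by deleting the block-internal descent bottoms maps to a Dyck path, while each block-internal descent—a peak immediately followed by its valley—is instead recorded as a single horizontal step $H$ that consumes the peak and valley together. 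Writing $h$ for the number of block-internal descents, one checks that every descent bottom is either a pivot (giving a down step) or a block valley (absorbed into an $H$) and that all $k$ ascent tops lie to the right of the vertical; this yields exactly $k-h$ up steps, $k-h$ down steps, and $h$ horizontal steps, i.e. a Schröder path of semilength $k$.

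It then remains to match the two defining conditions. As in Section 3, the existence of a CHC forces every prefix read by decreasing value to contain at least as many $U$'s as $D$'s, so the path never passes below the axis; the count of $\tfrac{2k+1-1}{2}=k$ descents is what guarantees the step totals above. The genuinely new point, and the step I expect to be the main obstacle, is the little-Schr\"oder condition that no $H$ lies \emph{on} the axis: I must show an $H$ is emitted only at height $\ge 1$. This should follow from the fact that every block sits strictly above the vertical of the vee, so the peak producing an $H$ is read only after a strictly taller entry of the same or a higher block has contributed an unmatched up step; making this quantitative—proving that the partial up-minus-down count is positive exactly at the moments an $H$ is emitted—is where the real work lies. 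Finally I would exhibit the inverse (delete the $H$'s to recover a Dyck path, hence a vee via the inverse of Lemma 3.1, then reinsert each height-$\ge 1$ horizontal step as a peak–valley descent at the entry dictated by its height, grouping consecutive reinsertions into blocks) and verify the two maps are mutually inverse, giving $|\mathcal{U}_{2k+1}(231,1432)|=|\mathcal{S}_k|$.
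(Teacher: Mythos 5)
There is a genuine gap, and it is precisely the step that makes this class come out to the little Schr\"oder numbers rather than something larger. Your structural lemma stops at ``vee-shaped with each point to the right of the vertical inflated into a $\{231,321\}$-avoiding block,'' i.e.\ blocks of the form $\nu_1\alpha_1\cdots\nu_r\alpha_r$ with the increasing runs $\alpha_t$ of \emph{arbitrary} length. The paper's proof does not stop there: it uses the CHC structure (via Lemma \ref{partition}) to show that each such run has length at most one. Concretely, if some block contained a peak followed by an increasing run of length $\geq 2$ (a $312$- or $1324$-shaped block), then two ascent tops would occur with one lying to the left of and immediately above the other; both must be NE endpoints of hooks, and the hook ending at the lower, righter one would be forced to terminate at the higher, lefter one instead, a contradiction. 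This reduces the class to ``vee-step'' permutations whose blocks are exactly $1$ or $21$. Without that reduction your path-reading is not well-defined: in a block with $|\alpha_t|\geq 2$, the $H$ you emit must consume the peak $\nu_t$ and the valley $\alpha_{t,1}$, but these are not adjacent in the value order --- the ascent top $\alpha_{t,2}$ sits between them and is supposed to emit its own $U$ --- so ``reading the plot by decreasing value'' does not produce a coherent step sequence. Your proposed inverse (reinserting each $H$ as an adjacent peak--valley pair) only ever produces size-$\leq 2$ blocks, so the two maps as described would not be mutually inverse on the class your structural lemma actually delivers.

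The other delicate point, that no $H$ lies on the axis, you correctly flag as needing work; the paper settles it again with Lemma \ref{partition} (an $H$ on the axis would give a prefix of $\pi$ with more ascent tops than descent bottoms, hence an ascent top that cannot be a NE endpoint). Your choice to decompose around the first entry rather than the maximum is a legitimate variant of the paper's decomposition (it mirrors Theorem \ref{vee-layered} rather than the paper's max-pivot argument), and once the block-size restriction is established the two descriptions coincide, so the fix is to insert the hook-intersection argument above before defining the path.
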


\begin{proof}
Consider $\pi \in \mathcal{U}_{2k+1}(231, 1432)$ and some consecutive subsequence of $\pi$, called $\rho$. Since $\rho$ avoids 231, we can write $\rho=\lambda\rho_m\mu$, where $\rho_m$ is the largest entry in $\rho$ and $\lambda$ is below $\mu$. We now consider two cases.\\

\noindent\underline{Case 1}: The subsequence $\lambda$ is nonempty. Then, since a point in $\lambda$ lies below and to the left of $\mu$ and $\rho_m$ lies above and horizontally between $\lambda$ and $\mu$, we have that $\mu$ must not contain 21 in order for $\rho$ to avoid 1432. Thus, $\mu$ is increasing. If $\mu$ has size $n\geq 2$, then $\rho_m$ and $\mu_n$ are both ascent tops and thus NE endpoints of hooks $H_i$ and $H_j$, respectively, in $\pi$'s CHC. If $\pi_i, \pi_j$ are the SW endpoints of $H_i, H_j$, respectively, then $i<j$ in order for the hooks to not intersect. But then $\rho_m$ would be the NE endpoint of $H_j$, and not $\mu_n$, because $\rho_m$ is to the left of and immediately above $\mu_n$. Thus, $\mu$ must have size at most 1, giving us that $\rho=\lambda\oplus \tau$, where $\tau$ is $1$ or $21$.\\

\noindent\underline{Case 2}: The subsequence $\lambda$ is empty. Then there are no clear restrictions on $\mu$.\\

We first perform this decomposition process on $\pi$. In case 1, we can repeat this decomposition process on $\lambda$; in case 2, we can repeat it on $\mu$. We repeat until the unknown subsequences are empty. The result is the decomposition $\pi=L1R$ where $L$ is decreasing and $R$'s normalization is $\tau_1\oplus \cdots \oplus \tau_\ell$ where each $\tau_i$ is $1$ or $21$. Thus $\pi$ is a permutation that is vee, except points to the right of the vertical can be replaced with the block 21. We call this type of permutation \textit{vee-step}. Note that, by definition, the set of vee-step permutations is the set of vee-layered permutations (described in the proof of Theorem \ref{vee-layered}) in which each layer is at most size 2.

Now we define a map from $\mathcal{U}_{2k+1}(231,1432)$ to $\mathcal{S}_k$ as follows: Given $\pi \in \mathcal{U}_{2k+1}(231,1432)$, we write $\pi = L1R$ as before and label every point in $L$ with the symbol $U$. Recall $R$ has normalization $\tau_1\oplus \cdots \oplus \tau_\ell$ where $\tau_i = 1$ or $21$. Treating each $\tau_i$ as a single unit, if $\tau_i=1$ we label the corresponding point in $R$ with a $D$ and if $\tau_i=21$ we label the corresponding pair points in $R$ with a single $H$. Then, starting at the lowest point 1 in $\pi$, we read off the labels from bottom to top to obtain a path $\Lambda$. Since $\pi$ is uniquely sorted, it has $k$ ascents and $k$ descents. The steps labeled with $H$ contain both an ascent top and a descent bottom, so $\Lambda$ has an equal number of $U$'s and $D$'s. Moreover, $\pi$ being sorted implies not only that every prefix of $\Lambda$ contains as many $U$'s as $D$'s, but also that there are no horizontal steps in $\Lambda$ along the horizontal axis. This is because $H$ corresponds to a 21 block, which contains an ascent top followed by a descent bottom. If this occurred along the horizontal axis, there would be a prefix subsequence $\pi_1\cdots\pi_m$ with more ascent tops than descent bottoms, meaning that there must be an ascent top that is not a NE endpoint, which contradicts Lemma \ref{partition}. Thus, $\Lambda\in \mathcal{S}_k$. The map is easily reversible, making it a bijection. Therefore $|\mathcal{U}_{2k+1}(231,1432)|=|\mathcal{S}_k|$.\end{proof}

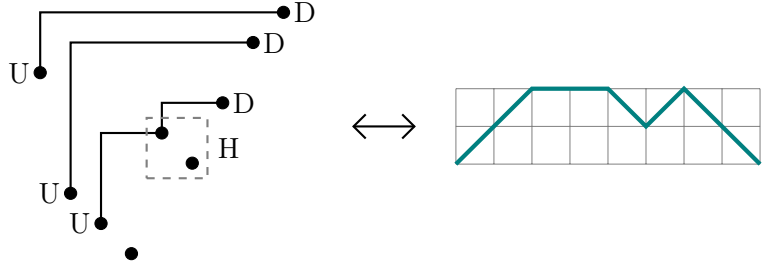
\begin{figure}[H]\label{shro bij}
\[\begin{array}{l}
\begin{tikzpicture}[scale=.4]
\draw[fill] (1,7) circle [radius=0.2];
\draw[fill] (2,3) circle [radius=0.2];
\draw[fill] (3,2) circle [radius=0.2];
\draw[fill] (4,1) circle [radius=0.2];
\draw[fill] (5,5) circle [radius=0.2];
\draw[fill] (6,4) circle [radius=0.2];
\draw[fill] (7,6) circle [radius=0.2];
\draw[fill] (8,8) circle [radius=0.2];
\draw[fill] (9,9) circle [radius=0.2];
\draw [thick] (1,7)--(1,9)--(9,9);
\draw [thick] (2,3)--(2,8)--(8,8);
\draw [thick] (3,2)--(3,5)--(5,5)--(5,6)--(7,6);
\draw [dashed, thick, gray] (4.5,3.5) rectangle (6.5, 5.5);
\node [right] at (9,9) {D};
\node [right] at (8,8) {D};
\node [right] at (7,6) {D};
\node [right] at (6.5,4.5) {H};
\node [left] at (1,7) {U};
\node [left] at (2,3) {U};
\node [left] at (3,2) {U};
\end{tikzpicture}\end{array}
\begin{array}{l}
\begin{tikzpicture}[scale=.7]
\draw [thick] (0,0)--(1,0);
\draw [thick] (0.2, -0.2)--(0,0)--(0.2,0.2);
\draw [thick] (0.8, -0.2)--(1,0)--(0.8, 0.2);
\draw [white] (1.1,0)--(1.2,0);
\end{tikzpicture}
\end{array}\begin{array}{l} \begin{tikzpicture}[scale=.45]
\draw[help lines] (0,0) grid (8,2);
\draw [ultra thick][teal] (0,0) -- (2,2) -- (4,2) -- (5,1) -- (6,2) -- (8,0);
\end{tikzpicture}\end{array}\] 
\caption{The vee-step permutation 732154689 and its image under the bijection in Theorem \ref{sch}, the Schr\"oder path $\mathit{UUHDUDD}$.}
\end{figure}

\section{Counting $\mathcal{U}_{2k+1}(231, 4312)$ with a Generating Function}

\cite{main} showed how to decompose permutations in $\mathcal{U}_{2k+1}(231, 4132)$ to obtain an identity proving these permutations are in bijection with Pallo comb intervals. In this section, we modify Defant's method to instead decompose the permutations in $\mathcal{U}_{2k+1}(231, 4312)$, which are counted by the same sequence.

\begin{Th}
We have
    \begin{center}
$\sum_{k\geq 0}|\mathcal{U}_{2k+1}(231, 4312)|x^k=C(xC(x))$,
    \end{center}
\end{Th}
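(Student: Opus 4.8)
The plan is to derive a functional equation for the generating function $U(x)=\sum_{k\geq 0}|\mathcal{U}_{2k+1}(231,4312)|x^k$ and then recognize its solution as $C(xC(x))$, where $C(x)=\sum_{k\geq 0}C_kx^k$ is the Catalan generating function. As a first step I would run the now-familiar consecutive-subsequence analysis: writing any consecutive subsequence as $\rho=\lambda\rho_m\mu$ with $\rho_m$ the maximum and $\lambda$ below $\mu$ (forced by $231$-avoidance), the only new constraint coming from $4312$-avoidance is that $\rho_m$ cannot sit atop a $312$, so $\mu$ avoids both $231$ and $312$ and is therefore layered. Iterating on $\lambda$ and invoking the consequence of Lemma~\ref{partition} that the maximum occupies the last position shows that every $\pi\in\mathcal{U}_{2k+1}(231,4312)$ is a direct sum of blocks of the form $1\ominus\nu$ with each $\nu$ layered (the rightmost block being the single point $(2k+1,2k+1)$). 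I would also record here that uniquely sorted layered permutations are enumerated by $C(x)$, by Lemma~3.3.

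The core of the argument is a recursive decomposition adapted from Defant's treatment of $\mathcal{U}_{2k+1}(231,4132)$, cut at the hook $H^{*}$ whose NE endpoint is the maximum $(2k+1,2k+1)$. Letting $(d,\pi_d)$ be the SW endpoint of $H^{*}$, the points in positions $1,\dots,d-1$ assemble into a permutation $L$, the points lying below $H^{*}$ assemble into a permutation $R$, and I would show that $L$ and $R$ are again members of the class while the way the entries of $R$ interleave in height with $\pi_d$ records exactly a layered permutation. Bookkeeping the semilengths (with the peak $\pi_d$ and the maximum accounting for an extra $x$, the height-interleaving contributing a factor $C(x)$, and $L,R$ each contributing a factor $U(x)$) is designed to yield $U(x)=1+xC(x)\,U(x)^2$, where the leading $1$ corresponds to the single permutation of length $1$.

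The hard part is verifying that this decomposition is a genuine bijection onto (class member)$\,\times\,$(layered)$\,\times\,$(class member)$^{2}$, that is, that the sorted condition is faithfully tracked rather than merely the descent count. That the CHC condition is the real content here, and that it depends on heights rather than on a positional ballot, can be seen already at length $5$: there are five direct sums of point-over-layered blocks with exactly two descents, but only three of them (for instance $32145$, $42135$, and $21435$, as opposed to $14325$ and $41325$) actually admit a CHC, matching the target value $3$. I would control this using Lemma~\ref{partition} (ascent tops are exactly the NE endpoints) together with the greedy construction of the CHC: a hook from a descent top must reach a strictly higher ascent top to its right, and I would prove that the greedy assignment on $\pi$ succeeds if and only if it succeeds on $L$ and on $R$ separately and the interleaving heights form a layered pattern, so that the three factors may indeed be chosen independently.

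Finally, I would close the argument by solving the functional equation. Substituting $y=xC(x)$ into the Catalan identity $C(y)=1+yC(y)^2$ gives $C(xC(x))=1+xC(x)\,C(xC(x))^2$, so $A(x):=C(xC(x))$ satisfies the same equation $A=1+xC(x)A^2$ as $U$, with $A(0)=1=U(0)$. Since this equation determines its solution uniquely as a formal power series (the coefficient of $x^k$ on the right-hand side involves only strictly lower-order coefficients of $U$), we conclude $U(x)=C(xC(x))$, as claimed. The principal obstacle throughout is the third paragraph: making the passage from the purely pattern-theoretic block structure to the sorted subclass precise, which is exactly the place where the CHC must be analyzed hook by hook.
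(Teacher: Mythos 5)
Your overall strategy coincides with the paper's: characterize the class structurally, extract the functional equation $U(x)=1+xC(x)U(x)^2$, and solve it via the Catalan identity. Your first and last paragraphs are correct --- the class is indeed the set of direct sums of blocks $1\ominus\nu$ with $\nu$ layered (last block a single point), and the algebra $C(xC(x))=1+xC(x)\,C(xC(x))^2$ plus uniqueness of the formal power series solution is exactly how the argument closes. Your length-$5$ check is also correct and correctly isolates where the difficulty lives. The gap is that the decomposition in your second paragraph, which is the entire content of the proof, is both deferred (``I would show\dots'', ``I would prove\dots'') and, as stated, cannot produce the factor $xC(x)U(x)^2$. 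If $H^{*}$ runs from $(d,\pi_d)$ to $(2k+1,2k+1)$, then $L$ has length $d-1$ and $R$ has length $2k-d$, so $|L|+|R|=2k-1$ is odd and $L$ and $R$ cannot both be odd-length members of the class. Moreover your layered object is derived ``interleaving data,'' contributing nothing to the length, so there is no size parameter for $C(x)$ to track; the identity $U=1+xC(x)U^2$ requires three genuine subpermutations of odd lengths summing to $2k+1$, one of them a uniquely sorted \emph{layered} permutation counted by $C(x)$ via Lemma 3.3.

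The paper achieves this with a two-stage cut, neither stage being the one you propose. First, every $\pi$ in the class splits at the hook whose SW endpoint is $(1,\pi_1)$ into a nice prefix and a general class member sharing one point (this is the step borrowed from Theorem 8.1 of \cite{main}). Second --- the delicate step --- a nice $\pi=\pi_1\lambda\mu(2k+1)$ has $\lambda$ layered, and one removes not all of $\lambda$ but the maximal prefix $\tau=\lambda_1\cdots\lambda_{2m+1}$ whose normalization lies in $\mathcal{U}_{2m+1}(231,312)$ and whose last entry begins a layer of size at least two; the leftover $\sigma$ is merged into $\pi'=\pi_1\sigma\mu$, and one verifies by explicit hook surgery that deleting $\tau$ and $(2k+1,2k+1)$ breaks exactly two hooks ($H_1$ loses its NE endpoint, some $H_i$ loses its SW endpoint), which are repaired into a single new hook, so $\pi'$ is again in the class with the correct descent count, and the process is reversible. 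That choice of cut point inside $\lambda$ and the hook-by-hook verification that the CHC survives in both directions is precisely what you flag as ``the principal obstacle'' and leave undone; without it you have the correct functional equation as a target but no proof that the class satisfies it.
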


\begin{center}
    \textit{where $C(x)=\frac{1-\sqrt{1-4x}}{2x}$ is the generating function of the sequence of Catalan numbers}.
\end{center}

\begin{proof}
Recall that a uniquely sorted permutation $\pi=\pi_1\cdots\pi_{2k+1}$ is called \emph{nice} if the point $(1,\pi_1)$ is the southwest endpoint of a hook whose northeast endpoint is $(2k+1,2k+1)$. Consider $\pi \in \mathcal{U}_{2k+1}(231, 4312)$ and, for now, assume that $\pi$ is nice. Since $\pi$ avoids 231, we are able to write $\pi=\pi_1\lambda\mu(2k+1)$, where $\lambda\in S_{\pi_1-1}$(which is nonempty because, by assumption, $1$ is a descent) and $\mu$ is a permutation of $\pi_1+1,\ldots, 2k$. Since $\lambda$ is a subsequence of $\pi$, it avoids 231. Since $\pi$ avoids 4312, $\lambda$ must avoid 312, so $\lambda$ is layered. 

Note that the first layer in $\lambda$ cannot be a single point. If it were, $\lambda_2$ would be an ascent top and thus a NE endpoint of a hook. But the only descent top before $\lambda_2$ is $\pi_1$, which lies above $\lambda_2$, a contradiction.

Let $m$ be the largest integer such that the normalization of $\lambda_1\cdots \lambda_{2m+1}$ is in $\mathcal{U}_{2m+1}(231, 312)$ and such that $\lambda_{2m+1}$ is the first entry in a layer of size at least two. We let $\tau=\lambda_1\cdots\lambda_{2m+1}$. Since the first layer of $\lambda$ has size at least two, and the single entry $\lambda_1$ is in $\mathcal{U}_{1}(231, 312)$, such a $\tau$ always exists. Let $\sigma$ be the remaining entries of $\lambda$, so that $\tau\sigma=\lambda$.

Now, let $\pi'=\pi_1\sigma\mu$, that is, the permutation obtained by removing $\tau$ and $(2k+1)$ from $\pi$. We claim that $\pi' \in \mathcal{U}_{2k-2m-1}(231, 4312)$. Because $\pi'$ is a subsequence of $\pi$, it avoids 231 and 4312. Since to obtain $\pi'$ we remove one permutation of length $2m+1$ and one of length 1, we have that $\pi'$ has length $2k-2m-1$.

Since $\pi\in \mathcal{U}_{2k+1}$ and $\tau \in \mathcal{U}_{2m+1}$, the permutations $\pi$ and $\tau$ have $k$ descents and $m$ descents, respectively. Thus when we take out $\tau$, $\pi'$ loses $m$ descents. Because all of $\sigma$ lies below $\pi_1$, we have that $\pi_1$ remains a descent top in $\pi'$. Since $(2k+1, 2k+1)$ has the maximal height in the plot of $\pi$, the point $(2k, \pi_{2k})$ was not a descent top of $\pi$, so when we remove $2k+1$, $\pi'$ does not lose any descents. Originally $\lambda_{2m+1}$ was not the last entry in a decreasing block, so it was a descent top in $\pi$ but now is not in $\tau$. Thus $\pi'$ has $k-m-1 = \frac{2k-2m-1-1}{2}$ descents.

All that is left to check is that $\pi'$ has a CHC. Note that $\tau$, when considered its own permutation, is uniquely sorted, and appears all the way on the left of $\pi$, save for $\pi_1$. Thus in $\pi$ all hooks with an endpoint in $\tau$ have the other endpoint in $\tau$, except for the last point in $\tau$, which is a descent top in $\pi$ and thus is the SW endpoint of some hook $H_i$. Then when we remove $\tau$ and $(2k+1, 2k+1)$ to obtain $\pi'$, all hooks in $\pi'$ are preserved except $H_1$, which is missing a NE endpoint, and $H_i$, which is missing a SW endpoint. We create the hook $H_1'$ connecting $\pi'_1$ to the NE endpoint of $H_i$, which resolves the issue and results in a CHC of $\pi'$. Thus, $\pi' \in \mathcal{U}_{2k-2m-1}(231, 4312)$. Now, let $\pi''$ be the normalization of $\pi'$ and $\tau'$ be the normalization of $\tau$. We have obtained $(\pi'',\tau') \in \mathcal{U}_{2k-2m-1}(231, 4312) \times \mathcal{U}_{2m+1}(231, 312)$ from the original $\pi$.

We now show this process is reversible. Given $(\pi'', \tau') \in \mathcal{U}_{2k-2m-1}(231, 4312) \times \mathcal{U}_{2m+1}(231, 312)$, we can insert the plot of $\tau'$ under $\pi''_1$ and merge its last block with the first block in $\pi''_2\cdots\pi''_{2k-2m-1}$. We then append $(2k+1)$ to recover $\pi$. Then, for the CHC, we create hook $H_1$ between $\pi_1$ and $\pi_{2k+1}$ and create a new hook between the NE endpoint of $\pi''_1$'s hook and the new descent top created by merging $\tau$. The remaining hooks are unaffected by the merge. Thus, this decomposition is bijective, so nice permutations in $\mathcal{U}_{2k+1}(231, 4312)$ correspond to pairs of permutations in $\mathcal{U}_{2k-2m-1}(231, 4312) \times \mathcal{U}_{2m+1}(231, 312)$.

From Lemma \ref{layered}, we have that $|\mathcal{U}_{2m+1}(231, 312)|=C_m$, so $\sum_{m\geq 0}|\mathcal{U}_{2m+1}(231, 312)|x^m =C(x)$. Reindexing gives $\sum_{n\geq 1}|\mathcal{U}_{n}(231, 312)|=xC(x^2)$. Let $B(x) = \sum_{k\geq 0}|\mathcal{U}_{2k+1}(231, 4312)|x^k$ and $\widetilde{B}(x)= \sum_{n\geq 1}|\mathcal{U}_{n}(231, 4312)|x^n$. Then nice permutations in $\mathcal{U}_{2k+1}(231, 4312)$ are counted by the generating function $[\widetilde{B}(x)][xC(x^2)][x]= x^2C(x^2)\Tilde{B}(x)$. The rest of the proof is identical to that of Theorem 8.1 in \cite{main}: using the generating function for nice permutations in $\mathcal{U}_{2k+1}(231, 4312)$ to count all such permutations, we can prove that $\tilde{B}(x)=x+xC(x^2)\tilde{B}(x)^2$, from which it follows that $B(x)=C(xC(x))$, as desired. Refer to Defant's proof for details.
\end{proof}

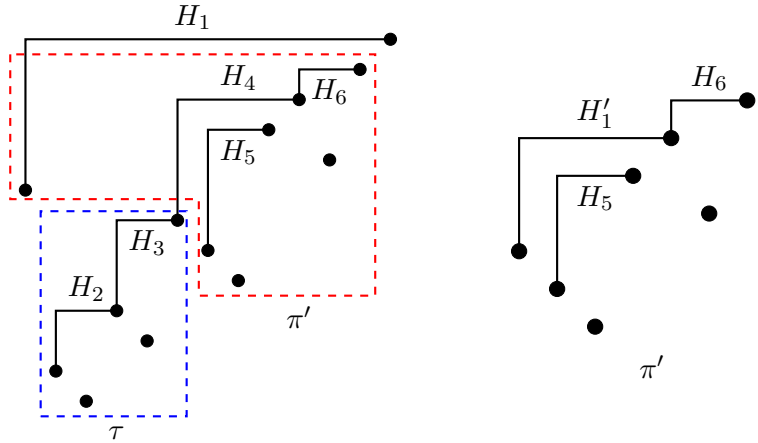
\begin{figure}[H]\label{generating}
\[\begin{array}{l}
\begin{tikzpicture}[scale=.4]
\draw[fill] (1,8) circle [radius=0.2];
\draw[fill] (2,2) circle [radius=0.2];
\draw[fill] (3,1) circle [radius=0.2];
\draw[fill] (4,4) circle [radius=0.2];
\draw[fill] (5,3) circle [radius=0.2];
\draw[fill] (6,7) circle [radius=0.2];
\draw[fill] (7,6) circle [radius=0.2];
\draw[fill] (8,5) circle [radius=0.2];
\draw[fill] (9,10) circle [radius=0.2];
\draw[fill] (10,11) circle [radius=0.2];
\draw[fill] (11,9) circle [radius=0.2];
\draw[fill] (12,12) circle [radius=0.2];
\draw[fill] (13,13) circle [radius=0.2];
\draw [thick] (2,2)--(2,4)--(4,4)--(4,7)--(6,7)--(6,11)--(10,11)--(10,12)--(12,12);
\draw [thick] (1,8)--(1,13)--(13,13);
\draw [thick] (7,6)--(7,10)--(9,10);
\draw [dashed, thick][blue] (1.5,0.5) rectangle (6.3,7.3);
\draw [dashed, thick][red] (0.5,12.5) -- (0.5, 7.7) -- (6.7,7.7)--(6.7,4.5)--(12.5, 4.5)--(12.5,12.5)--(0.5,12.5);

\node [above] at (6.5, 13) {$H_1$};
\node [above] at (3, 4) {$H_2$};
\node [below] at (5, 7) {$H_3$};
\node [above] at (8, 11) {$H_4$};
\node [below] at (8, 10) {$H_5$};
\node [below] at (11, 12) {$H_6$};
\node [below] at (4, 0.5) {$\tau$};
\node [below] at (10, 4.5) {$\pi'$};

\end{tikzpicture}\end{array}\quad\quad\quad
\begin{array}{l} \begin{tikzpicture}[scale=.5]
\draw[fill] (1,3) circle [radius=0.2];
\draw[fill] (2,2) circle [radius=0.2];
\draw[fill] (3,1) circle [radius=0.2];
\draw[fill] (4,5) circle [radius=0.2];
\draw[fill] (5,6) circle [radius=0.2];
\draw[fill] (6,4) circle [radius=0.2];
\draw[fill] (7,7) circle [radius=0.2];
\draw [thick] (1,3)--(1,6)--(5,6)--(5,7)--(7,7);
\draw [thick] (2,2)--(2,5)--(4,5);
\node [below] at (4.5, 0.5) {$\pi'$};

\node [above] at (3, 6) {$H_1'$};
\node [above] at (6, 7) {$H_6$};
\node [below] at (3, 5) {$H_5$};
\end{tikzpicture}\end{array}\] 
\caption{Decomposing the nice permutation $\pi$ into $\tau$ and $\pi'$.}
\end{figure}

\section{Conclusion}
The theorems in this paper prove nine out of the eighteen conjectures in \cite{main}, which enumerate classes of the form $\mathcal{U}_{2k+1}(\tau^{(1)}, \tau^{(2)})$, where $\tau^{(1)}$ has length 3 and $\tau^{(2)}$ has length 4. The nine remaining conjectures are given again in Table \ref{Tab2}.

{\tabulinesep=1.2mm
\begin{table}[h]
\centering

\begin{tabu}{|c|[1.5pt]c|}

    \hline Patterns & OEIS Sequence \\\tabucline[2.5pt]{-}
    $312,1243$ &  A122368 \\\hline
    $231,1243$ & A001700  \\\hline \noalign{\vskip -.13cm}
    \makecell{\,\,$132,2341$ \vspace{.05cm}\\ $132,4123$} & \hspace{.13cm}A109081 \vspace{-.155cm} \\\hline
    $312,2341$ & A006605 \\\hline

\end{tabu}
\begin{tabu}{|c|[1.5pt]c|}

    \hline Patterns & OEIS Sequence \\\tabucline[2.5pt]{-}
    $312,3241$ & A279569 \\\hline
    $312,4321$ & A063020 \\\hline
    $132,4231$ & A071725 \\\hline
    $231,4321$ & A056010 \\\hline
    \quad & \quad \\\hline
    
\end{tabu}\vspace{.3cm}
\caption{The remaining conjectural OEIS sequences enumerating sets of the form $\mathcal U_{2k+1}(\tau^{(1)},\tau^{(2)})$.}
\label{Tab2}
\end{table}}

It should be noted that the remaining conjectures include $\mathcal{U}_{2k+1}(231, 1243)$ being counted by ${2k-1\choose k}$, which is the same sequence counting the two classes in Section \ref{almost}. Decomposition gives that for a permutation $\pi$ in this class, we can write  $\pi= DD'\tau I$ where $D,D'$ are decreasing, $I$ is increasing, and $\tau$ is vee and is below $D$ and $I$ but is above $D'$. However, the author was unable to find a way to count these permutations and encourages the reader to try. 

The other eight remaining classes are more difficult because, while some are counted by special lattice paths according to the OEIS, these paths seem to either not have the correct length or not have the correct properties to create bijections like the ones in this paper. For example, the elements of $\mathcal{U}_{2k+1}(132, 2341)$ and $\mathcal{U}_{2k+1}(132, 4123)$ are counted by Motzkin paths of length $2k-3$ with no downsteps in even positions. But general Motzkin paths do not require a specific number of down and up steps and $2k-3$ is less than $2k+1$, so it would be difficult to find a bijection that associates descents and ascents in these permutations with certain patterns of steps. That said, these remaining classes could be enumerated through other methods such as the direct counting that we do in Theorem \ref{modveethm}, or, more likely, generating functions. Another possible route would be defining new types of lattice paths that are in bijection with these classes and then enumerating the new paths, which should be easier since there is a greater body of work on the properties of and counting of lattice paths.

Also, according to the data generated in \cite{main}, the 24 sequences counting classes of the form $\mathcal{U}_{2k+1}(\tau)$ where $\tau$ has length four appear to be new and thus studying these classes is likely to be more challenging than the classes studied in this paper. According to the same author, the sequence counting $\mathcal{U}_{2k+1}(231,4123)$ is also not in the OEIS.

Beyond this, the natural next step is to enumerate classes of the form $\mathcal U(\tau^{(1)},\tau^{(2)})$, where $\tau^{(1)}$ and $\tau^{(2)}$ both have length 4, for there is previous work devoted to counting general permutations avoiding two patterns of length four that could prove to be very useful; see \cite{c1, kitaev, linton}. Moreover, some of these classes have nice descriptions, such as the class of skew-merged permutations (which avoid 3412 and 2143) and the class of separable permutations (which avoid 2413 and 3142).

\acknowledgments{The author would like to thank Professor Joe Gallian and advisors Colin Defant and Aaron Berger for organizing the Duluth Research Experience for Undergraduates, the lovely and supportive program where this research took place. The author would also like to thank Defant for introducing this problem and being a constant resource throughout the process of solving it. Lastly, a special thanks to Defant, Gallian, Berger, and former Duluthian Marisa Gaetz for providing invaluable feedback on earlier iterations of the present paper.}

\bibliographystyle{abbrvnat}
\bibliography{bibliography}

\end{document}